\title{Canonical immunity and genericity}
\subjclass[2010]{03D28, 68Q30}
\keywords{canonical immunity, $n$-generic, computable Mathias forcing, Schnorr randomness}
\author{Achilles A.~Beros \and Konstantinos A.~Beros}
\address[A. Beros]{Department of Mathematics, University of Hawai'i at M\={a}noa}
\email{beros@math.hawaii.edu}
\address[K. Beros]{Department of Mathematics, Miami University}
\email{berosk@miamioh.edu}
\theoremstyle{plain}
\newtheorem{theorem}{Theorem}[section]
\newtheorem{proposition}[theorem]{Proposition}
\newtheorem{lemma}[theorem]{Lemma}
\newtheorem{corollary}[theorem]{Corollary}
\newtheorem*{theorem*}{Theorem}
\newtheorem*{corollary*}{Corollary}
\newtheorem*{thm::schnorr}{Theorem~\ref{schnorr}}
\newtheorem*{thm::2-generic}{Theorem~\ref{2-generic}}
\newtheorem*{thm::mathias-generic}{Theorem~\ref{mathias-generic}}
\newtheorem*{thm::eff}{Theorem~\ref{eff}}
\theoremstyle{definition}
\newtheorem{definition}[theorem]{Definition}
\theoremstyle{remark}
\newtheorem*{remark}{Remark}
\begin{document}

\begin{abstract}Whereas the usual notions of immunity -- e.g., immunity, hyperimmunity, etc. -- are associated with Cohen genericity, {\em canonical immunity}, as introduced in Beros--Khan--Kjos-Hanssen \cite{bkk}, is associated instead with Mathias genericity.  Specifically, every Mathias generic is canonically immune and no Cohen 2-generic computes a canonically immune set.\end{abstract}

\maketitle


\section{Introduction}

When examining questions of algorithmic complexity, there are two principal approaches.  The first of these approaches is based on Baire category; the second on measure.  In each case, there are a variety of notions which formalize the concept of an unremarkable, or typical, subset of $\omega$ -- the set of all natural numbers.  Following a convention established by set theorists, subsets of $\omega$ are referred to as {\em reals}.

According to the category approach, a real is unremarkable if it is generic.  That is, an unremarkable real must be an element of some prescribed collection of dense open subsets of the Cantor space $2^\omega$.  The simplest example is that of a weakly 1-generic real: a real is {\em weakly 1-generic} iff it is a member of every dense computably enumerable (c.e.)~open subset of $2^\omega$.  (An open set is c.e.~if it is the union of a set of basic neighborhoods determined by a c.e.~set of binary strings.)  In a sense, weak 1-genericity is an effective form of Cohen genericity as considered in the theory of forcing.

A number of natural notions arising in computablility theory turn out to be inherent properties of generic reals.  For example, weakly 1-generic reals are {\em hyperimmune}.  That is, the increasing enumeration of a weakly 1-generic subset of $\omega$ is not computably bounded.

The present paper considers the following form of Cohen genericity.

\begin{definition}
A real $R \in 2^\omega$ is {\em $n$-generic} iff, for any $\Sigma^0_n$ set of strings $X \subseteq 2^{<\omega}$, either some initial segment of $R$ is in $X$ or some initial segment of $R$ has no extension in $X$.  
\end{definition}

Turning to the measure theory approach, the associated formulation of ``unremarkable'' is that coming from algorithmic randomness.  In the broadest terms, a real is random if it avoids every member of a specified class of null sets.  For instance, a real is {\em Martin-L\"of random} if it is contained in no null $\Pi^0_2$ class $\bigcap_n U_n$ such that the Lebesgue measure of $U_n$ is bounded by $2^{-n}$.  These null sets are referred to as {\em Martin-L\" of tests}.  It has been argued that the definition of a Martin-L\"of test is too broad since the measures of the sets $U_n$ are potentially only left c.e.~real numbers.  Of interest here is the following weaker notion of randomness.

\begin{definition}
A real is {\em Schnorr random} iff it is a member of no null $\Pi^0_2$ class $\bigcap_n U_n \subseteq 2^\omega$ where the measure of $U_n$ is uniformly computable in $n$ and bounded by $2^{-n}$.
\end{definition}

Intriguingly, there is a form of immunity -- called canonical immunity -- which is very closely associated with Schnorr randomness.  The definition of canonical immunity is due to Bj\o rn Kjos-Hanssen and requires the concept of a canonical numbering.

\begin{definition}
Let $\mathcal P_{\sf fin} (\omega)$ denote the set of finite subsets of $\omega$.  A {\em canonical numbering} is a surjective function $D: \omega \rightarrow \mathcal P_{\sf fin}(\omega)$ such that the predicate
\[
P(x,e) \iff x \in D(e)
\]
and the function
\[
e \mapsto \max (D(e))
\]
are both computable.  Equivalently, if $\mathcal P_{\sf fin} (\omega)$ is identified with the set of canonical codes for finite subsets of $\omega$, a canonical numbering can be regarded as a computable surjection $D : \omega \rightarrow \mathcal P_{\sf fin} (\omega)$.
\end{definition}

\begin{definition}[\cite{bkk}]
An infinite set $R \subseteq \omega$ is {\em canonically immune} iff there is a total computable function $h : \omega \rightarrow \omega$ such that, given any fixed canonical numbering $D : \omega \rightarrow \mathcal P_{\sf fin} (\omega)$, 
\[
D(i) \subseteq R \implies |D(i)| \leq h(i)
\]
for all but finitely many $i$.  Such a witnessing function $h$ is a {\em modulus of immunity} for $R$.
\end{definition}

The first author, together with Mushfeq Khan and Bj\o rn Kjos-Hanssen \cite{bkk}, have shown that every canonically immune set is immune and, moreover, that every Schnorr random is canonically immune with modulus of immunity $i \mapsto i$.  The present paper provides some counterpoints to these observations and shows that the notion of canonical immunity is quite distinct from the standard notions of immunity and genericity in computability theory.  The first main result of this paper shows in a strong way that canonical immunity is not a property of Cohen generic reals.

\begin{thm::2-generic}
If $G \in 2^\omega$ is 2-generic, $G$ computes no canonically immune set.
\end{thm::2-generic}

A corollary is that the class of reals which compute no canonically immune set is comeager.

Theorem~\ref{2-generic} must be contrasted with the fact that, by Theorem~\ref{CI+HI} below, there are reals which are both canonically immune and hyperimmune.  Kurtz \cite{kurtz} has shown that every hyperimmune is of weak 1-generic degree.  Thus, there are weakly 1-generic reals which are Turing equivalent (and hence compute) canonically immune sets -- although the former must not be 2-generic by Theorem~\ref{2-generic}.

There is another form of genericity in computability theory which arises from an effective version of a forcing poset introduced by A.~R.~D.~Mathias \cite{mathias}.  Computable Mathias forcing has been studied in several papers.  Notably, Cholak, Dzhafarov, Hirst and Slaman \cite{cdhs} show that every Mathias generic computes an $n$-generic.  Section~\ref{mathias} below explores the connection between canonical immunity and Mathias genericity.  Whereas $n$-generics (for $n \geq 2$) do not even compute canonically immune sets, Mathias generic sets are always canonically immune.  Section~\ref{preliminaries} contains the necessary definitions relevant to Mathias forcing and Section~\ref{mathias} contains a proof of the following result.

\begin{thm::mathias-generic}
Every Mathias generic is canonically immune.
\end{thm::mathias-generic}

Using further forcing arguments, the last theorem can be used to show the following.

\begin{thm::eff}
There exists a set $R \subseteq \omega$ which is canonically immune and computes no effectively immune set.
\end{thm::eff} 

In light of Theorems~\ref{2-generic} and \ref{mathias-generic}, canonical immunity can be regarded as a form of immunity associated with Mathias genericity rather than Cohen genericity.

The final main result of this paper shows that there are canonically immune sets which are not Schnorr random, demonstrating that the class of canonically immune sets properly contains the class of Schnorr random reals.

\begin{thm::schnorr}
There exists a set $R\subseteq \omega$ which is a canonically immune set and not Schnorr random.
\end{thm::schnorr}

The proof of this result hinges on the observation that Mathias generics cannot be Schnorr random.


\section{Preliminaries and notation}\label{preliminaries}

\subsection{Standard notation}

To a great extent, this paper follows the notation and terminology of Soare \cite{soare}.  For the sake of completeness, the most important points are presented below.

For $e \in \omega$, let $\{ e \} (n)$ denote the result of applying the Turing machine coded by $e$ to input $n$ -- regardless of whether or not this computation terminates.  If $A$ is a subset of $\omega$, a function $\omega \rightarrow \omega$, or a finite string, then $\{ e \}^A (n)$ denotes the result of applying the oracle machine coded by $e$, with oracle $A$, to input $n$.  When a finite string is used as an oracle and the oracle machine makes any queries outside the domain of the string, the computation automatically diverges.  

The notation $W_e$ indicates the domain of the Turing machine coded by $e$.  If $A$ is an oracle, $W^A_e$ denotes the domain of the oracle machine coded by $e$ with oracle $A$.  When computation time is restricted, $W^A_{e,s}$ denotes the domain of the oracle machine coded by $e$ with oracle $A$, when it is only allowed to run for $s$ computation stages.

For sets $A , B \subseteq \omega$, write $A \leq_T B$ if $A$ is Turing reducible to $B$, i.e., there is a code $e$ for an oracle machine such that $\{ e \}^B$ is the characteristic function of $A$.

For finite strings $\alpha , \beta \in 2^{< \omega}$, write $\alpha \preceq \beta$ to indicate that $\alpha$ is an initial segment of $\beta$.  Let $\alpha {}^\smallfrown \beta$ denote the concatenation of $\alpha$ and $\beta$.  If $R \subseteq 2^\omega$ and $\alpha \in 2^{<\omega}$, the notation $\alpha \prec R$ indicates that $\alpha$ is an initial segment of $R$.  For a string $\alpha$, the length of $\alpha$ is indicated by ${\rm length} (\alpha)$.  

If $X \subseteq 2^{<\omega}$ is a set of binary strings and $\sigma \in 2^{<\omega}$,  then $X$ is {\em dense below $\sigma$} iff every $\tau \succeq \sigma$ has an extension in $X$.  If every binary string has an extension in $X$, then $X$ is called {\em dense}.

If $F \subseteq \omega$ is a finite set and $\alpha \in 2^{<\omega}$ is a finite string, $F \subseteq \alpha$ means that $\alpha (n) = 1$ for each $n \in F$, i.e., $F$ is a subset of the finite set of which $\alpha$ is the characteristic function.

For any set $S$, let $|S|$ be the cardinality of $S$.


A function $f : \omega \rightarrow \omega$ is $\Delta^0_2$ or {\em limit computable} iff there is a uniformly computable sequence $(f_s)_{s \in \omega}$ of total functions such that $f$ is the pointwise limit of $(f_s)_{s \in \omega}$.

If $F \subseteq \omega$ is a finite set, the {\em canonical code} for $F$ is the integer $\sum_{n \in F} 2^n$.  The set $\mathcal P_{\sf fin} (\omega)$ of all finite subsets of $\omega$ is identified with the set of all canonical codes.  With this in mind, it is sensible to consider computable functions $D : \omega \rightarrow \mathcal P_{\sf fin} (\omega)$.  The key properties of a computable function $D : \omega \rightarrow \mathcal P_{\sf fin} (\omega)$ are
\begin{enumerate}
	\item the predicate ``$x \in D(i)$'' is computable and
		\item the function $i \mapsto \max (D(i))$ is computable.
\end{enumerate} 

\subsection{The Ellentuck topology and Mathias forcing}

While $n$-genericity derives from a study of the standard topology on $2^\omega$, Mathias genericity is based upon a non-standard topology on the space of infinite subsets of $\omega$.  Let $[\omega]^{\aleph_0}$ denote the set of all infinite subsets of $\omega$, regarded as a $G_\delta$ subspace of $2^\omega$.  Given a finite set $a \subseteq \omega$ and an infinite set $A \subseteq \omega$ with $\max (a) < \min (A)$, let
\[
[a,A] = \{ R \in [\omega]^{\aleph_0} : a \subseteq R \subseteq a \cup A\}.
\] 
The sets $[a,A]$ form the basis of a strong Choquet (hence Baire) topology on $[\omega]^{\aleph_0}$, which is not second countable.  This topology is called the {\em Ellentuck topology}.  The fact that the Ellentuck topology is Baire implies that the countable intersection of dense open sets is nonempty.  Section 19.D of Kechris \cite{kechris} is a clear account of the most important properties of the Ellentuck topology.

\begin{remark}
For $[a,A]$ and $[b,B]$ as above, $[b,B] \subseteq [a,A]$ iff $a \subseteq b$, $b\setminus a \subseteq A$ and $A \supseteq B$.  (See Kechris \cite{kechris}, exercise 19.12.)
\end{remark}

The analogue of the Ellentuck topology in the context of computability theory is the {\em effective Ellentuck topology}, wherein the basic open sets consist only of those $[a , A]$ such that $A$ is an infinite computable subset of $\omega$.  This topology is also strong Choquet and is, in addition, second countable.  The effective Ellentuck topology is the basis of computable Mathias forcing (see Cholak, Dzhafarov, Hirst and Slaman \cite{cdhs}).  The basic open sets in the effective Ellentuck topology are called {\em Mathias conditions}.

\begin{definition}
If $\mathcal X$ is a family of Mathias conditions -- i.e., basic open sets in the effective Ellentuck topology -- and $R \in [\omega]^{\aleph_0}$, then $R$ {\em meets} $\mathcal X$ iff there exists $[a,A] \in \mathcal X$ such that $R \in [a,A]$.  A family $\mathcal X$ of Mathias conditions is {\em dense} iff every Mathias condition contains a member of $\mathcal X$.  A set $R \subseteq \omega$ is {\em Mathias generic} iff $R$ meets every arithmetically definable dense set of Mathias conditions.
\end{definition}

A set $\mathcal X$ of Mathias conditions is dense iff the open set $\bigcup \mathcal X$ is dense in the effective Ellentuck topology on $[\omega]^{\aleph_0}$.  This observation, combined with the fact that the effective Ellentuck topology is Baire, guarantees the existence of Mathias generic reals.

\begin{remark}
If $R \subseteq \omega$ is Mathias generic, then $R$ is infinite.  This follows from the fact that
\[
\mathcal X_n = \{ [a,A] : |a| \geq n\}
\]
is a dense set of conditions.
\end{remark}


\section{Basic results}

The proof of the following theorem shows a method for producing a $\Delta^0_2$ canonically immune set.  An immediate consequence of this is that there are canonically immune sets which do not compute any 2-generic reals.

\begin{theorem}\label{delta two}
There is a $\Delta^0_2$ set $R \subseteq \omega$ which is canonically immune.
\end{theorem}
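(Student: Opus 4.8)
The plan is to take $R$ to be the range of a strictly increasing function $i \mapsto r_i$, computable from $\emptyset'$, whose values dominate every total computable function. The key observation is that for a canonical numbering $D$ the map $i \mapsto \max D(i)$ is total computable, so once $r_i$ exceeds $\max D(i)$ --- which eventually happens --- any $D(i)$ with $D(i) \subseteq R$ must in fact be contained in the initial segment $\{r_0, \dots, r_{i-1}\}$ of $R$, and hence has at most $i$ elements. Thus $h(i) = i$ will be a modulus of immunity for $R$.

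Concretely, I would define $r_i$ by recursion: let $r_i$ be the least $n$ such that $n > r_k$ for every $k < i$ and $n > \{j\}(i)$ for every $j \le i$ with $\{j\}(i){\downarrow}$, where $\{j\}$ ranges over a standard enumeration of the Turing machines as in Section~\ref{preliminaries}. Since $\{\,\{j\}(i) : j \le i,\ \{j\}(i){\downarrow}\,\}$ is finite, $r_i$ is well defined, and the sequence is strictly increasing, so $R = \{r_i : i \in \omega\}$ is infinite. The oracle $\emptyset'$ can decide which of $\{0\}(i), \dots, \{i\}(i)$ converge and compute those values, so $i \mapsto r_i$ is $\emptyset'$-computable (indeed limit computable, via the obvious stage-by-stage approximation), whence $R$ is $\Delta^0_2$. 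Finally, if $p$ is total computable, say $p = \{j_0\}$, then for all $i \ge j_0$ we have $j_0 \le i$ and $\{j_0\}(i){\downarrow}$, so $r_i > \{j_0\}(i) = p(i)$; hence $i \mapsto r_i$ dominates every total computable function.

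It then remains to check that $h(i) = i$ is a modulus of immunity for $R$. Given a canonical numbering $D$, the function $p(i) = \max D(i)$ (with the harmless convention $\max \emptyset = 0$) is total computable, so one may fix $N$ with $r_i > p(i)$ for all $i \ge N$. Suppose $i \ge N$ and $D(i) \subseteq R$. Then every element of $D(i)$ is at most $\max D(i) = p(i) < r_i$ and lies in $R = \{r_0, r_1, \dots\}$; since the $r_k$ are strictly increasing, each such element is one of $r_0, \dots, r_{i-1}$, and therefore $|D(i)| \le i$. As $N$ depends on $D$ but $h$ does not, this shows that $R$ is canonically immune.

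I do not anticipate a genuinely hard step --- this is a direct domination construction. The one subtlety to keep in mind is that $h$ must be a single computable function that works for every canonical numbering simultaneously; consequently it does not suffice to make $R$ sparse at any fixed rate, since $\max D(i)$ can be made to grow arbitrarily fast by choosing $D$ appropriately, and one really does need the enumeration of $R$ to dominate all computable functions --- which the $\emptyset'$-recursion above supplies while keeping $R$ recursive in $\emptyset'$.
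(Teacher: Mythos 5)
Your proof is correct, and it takes a genuinely different route from the paper's. The paper also produces $R$ by a limit-computable construction, but its diagonalization is an avoidance argument: at stage $s$ it approximates each partial numbering by $D_{e,s}$ and picks $x_{n,s}$ least above $x_{n-1,s}$ with $x_{n,s} \notin \bigcup\{D_{e,s}(i) : e,i \le n,\ |D_{e,s}(i)| > i\}$; in the limit, $x_n$ has been kept out of every ``dangerous'' $D_e(i)$ with $e,i \le n$, so that $D_e(i) \subseteq R$ and $i \ge e$ force $D_e(i) \subseteq \{x_0,\dots,x_{i-1}\}$. Your construction instead isolates a clean intermediate lemma: if the increasing enumeration of $R$ dominates every total computable function, then $R$ is canonically immune with modulus $i \mapsto i$, because $i \mapsto \max D(i)$ is itself total computable, so once $r_i$ outruns $\max D(i)$ any $D(i) \subseteq R$ is contained in $\{r_0,\dots,r_{i-1}\}$. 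That observation does the work the paper does by explicit avoidance, and it is arguably tidier: it simultaneously gives hyperimmunity, so it yields Theorem~\ref{CI+HI} with no extra effort. The trade-off is that the paper's avoidance method adapts more readily to later constructions such as Theorem~\ref{bci} (bi-canonical immunity), where one must keep the complement equally thick and therefore cannot afford to make $R$ grow at a rate dominating every computable function.
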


\begin{proof}
Let $\varphi : \omega^2 \rightarrow \mathcal P_{\sf fin} (\omega)$ be a universal partial computable function and define
\[
D_{e,s} (i) = \begin{cases}
\varphi_e (i) &\mbox{if $\varphi_e (i)$ converge in $s$ or fewer stages},\\
\emptyset &\mbox{otherwise}.
\end{cases}
\]
In particular, given a canonical numbering $D : \omega \rightarrow \mathcal P_{\sf fin} (\omega)$, there is an $e \in \omega$ such that for each $i$,
\[
D(i) = \lim_s D_{e,s} (i).
\]

For $s \in \omega$, inductively pick $x_{n,s} \in \omega$ to be least such that 
\[
x_{0,s} < x_{1,s} < \ldots
\]
and
\[
x_{n,s} \notin \bigcup \{ D_{e,s} (i) : e,i \leq n \mbox{ and } |D_{e,s} (i)| > i\}.
\]
Let $x_n = \lim_s x_{n,s}$.  For each $n \in \omega$, this limit exists because $s$ may be chosen large enough that $D_{e,s} (i)$ has stabilized for all $e,i \leq n$.  Note that 
\[
x_0 < x_1 < \ldots 
\]
and let 
\[
R = \{ x_n : n \in \omega\}.
\]
Observe that $R$ is $\Delta^0_2$.

{\em Claim.}  $R$ is canonically immune.

It suffices to show that, for each canonical numbering $D$, there exists $k \in \omega$ such that $|D(i)| \leq i$ whenever $D(i) \subseteq R$ and $i \geq k$.  Indeed, fix a canonical numbering $D : \omega \rightarrow \mathcal P_{\sf fin} (\omega)$ with $D = \lim_s D_{e,s}$.  Suppose that $D(i) \subseteq R$ for some $i \geq e$.  Let $n$ be least such that
\[
D(i) \subseteq \{ x_0 , \ldots , x_n\}.
\]
Pick $s$ large enough that $D_{j,s} (q)$ (for $j,q \leq i$) and $x_{p,s}$ (for $p \leq n$) have all stabilized.

If $n < i$, then 
\[
|D(i)| \leq n+1 \leq i.
\]
On the other hand, if $n \geq i$, the choice of $x_{n,s}$ and the fact that 
\[
x_{n,s} = x_n \in D(i) = D_{e,s} (i)
\]
guarantee $|D(i)| \leq i$ since $e , i \leq n$.

As $D$ was arbitrary, this shows that $R$ is canonically immune and completes the proof.
\end{proof}

\begin{theorem}\label{bci}
There is a set $R \subseteq \omega$ such that both $R$ and $\omega \setminus R$ are canonically immune, i.e., $R$ is {\em bi-canonically immune}.
\end{theorem}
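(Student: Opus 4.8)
The plan is to refine the $\Delta^0_2$ construction of Theorem~\ref{delta two} so that every sufficiently large canonically coded finite set is \emph{split} by $R$, i.e.\ meets both $R$ and $\omega\setminus R$. The guiding observation is a restatement of the two immunity demands: a set is canonically immune with modulus $h$ precisely when, for every canonical numbering $D$, all but finitely many $D(i)$ with $|D(i)|>h(i)$ fail to be contained in it -- equivalently, every such ``large'' $D(i)$ meets its complement. Thus $R$ is canonically immune iff every large $D(i)$ meets $\omega\setminus R$, and $\omega\setminus R$ is canonically immune iff every large $D(i)$ meets $R$. It therefore suffices to build a single $R$ such that, for one fixed computable $h$, every large $D(i)$ meets both $R$ and $\omega\setminus R$ (past finitely many exceptions that may depend on the index of $D$).

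First I would fix the universal approximation $D_{e,s}$ from Theorem~\ref{delta two}, so that every canonical numbering equals $\lim_s D_{e,\cdot}$ for some $e$, and introduce one requirement $N_{e,i}$ for each pair $e\le i$: if $F_{e,i}:=\lim_s D_{e,s}(i)$ has more than $h(i)$ elements, then $F_{e,i}$ meets both $R$ and $\omega\setminus R$. Restricting to pairs with $e\le i$ is harmless for exactly the reason used in Theorem~\ref{delta two}: a numbering with index $e$ needs its values $D(i)$ controlled only for $i\ge e$, and the finitely many indices $i<e$ are absorbed by the ``all but finitely many'' clause in the definition of canonical immunity. Satisfying all the $N_{e,i}$ then makes both $R$ and $\omega\setminus R$ canonically immune with the common modulus $h$, provided both are infinite.

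Next I would run the construction as a conflict-resolution (priority) argument. List the pairs $(e,i)$ with $e\le i$ in order of increasing $i$, breaking ties by $e$; then $N_{e,i}$ has fewer than $(i+1)^2$ predecessors, since for each $i'\le i$ there are only $i'+1$ admissible values of $e'$. At stage $s$, process these requirements in order: when $N_{e,i}$ is reached and $|D_{e,s}(i)|>h(i)$, have it \emph{claim} the two least elements of $D_{e,s}(i)$ not already claimed at this stage by a higher-priority requirement, placing one into $R$ and one into $\omega\setminus R$. Since the higher-priority requirements have together claimed at most $2(i+1)^2$ elements, letting $h$ grow quadratically -- say $h(i)=2(i+1)^2+2$ -- ensures that $D_{e,s}(i)$ always has at least two unclaimed elements, so the action can be carried out. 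Finally, split the elements of $\omega$ that are never claimed between $R$ and $\omega\setminus R$ so that each side receives infinitely many; this guarantees both sets are infinite (if infinitely many requirements act, the claimed elements alone already force this).

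It then remains to verify the usual facts. Each $|D_{e,s}(i)|$ eventually stabilizes, so each $N_{e,i}$ eventually decides whether it acts; by induction along the priority list the elements each acting requirement claims eventually freeze; and below any fixed $z$ only finitely many requirements can ever claim an element, so the membership of $z$ in $R$ converges and $R$ is $\Delta^0_2$. In the limit, if $|F_{e,i}|>h(i)$ then $N_{e,i}$ acts at all large stages and its stable action puts one element of $F_{e,i}$ into $R$ and another into $\omega\setminus R$, so $N_{e,i}$ is met. I expect the main obstacle to be precisely this bookkeeping: choosing a priority order and a modulus $h$ that together guarantee enough unclaimed room for each requirement to split its set after all higher-priority claims have been made, and then confirming that the combined, infinitely-often action of all the requirements still stabilizes so that $R$ is genuinely well-defined.
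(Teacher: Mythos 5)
Your proof is correct, and it shares the paper's governing idea -- bi-canonical immunity with modulus $h$ is exactly the demand that every sufficiently large $D(i)$ be \emph{split} by $R$, i.e.\ meet both $R$ and $\omega\setminus R$ -- but the mechanics are genuinely different. The paper fixes a (non-effective) listing $D_0, D_1, \ldots$ of \emph{all} canonical numberings, partitions $\omega$ into the pairs $I_p=\{2p,2p+1\}$, and at stage $s=\langle e,i\rangle$ devotes two fresh pairs $I_{p_s}, I_{q_s}$ meeting $D_e(i)$ to the split; since each stage consumes pairs never touched before, the actions are automatically non-interfering, no priority apparatus is needed, and assigning unused pairs by parity keeps both sides infinite. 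You instead work with the $\Delta^0_2$ universal approximation $D_{e,s}$ from Theorem~\ref{delta two}, order the requirements $N_{e,i}$ by priority, and at each stage let each requirement claim two unclaimed elements of its current approximated set; the quadratic modulus $h(i)=2(i+1)^2+2$ guarantees room after at most $2(i+1)^2$ higher-priority claims (your count of fewer than $(i+1)^2$ predecessors is right: $\sum_{i'<i}(i'+1)+e<(i+1)^2$), and stabilization follows by induction along the priority list once the finitely many relevant $D_{e',s}(i')$ have settled. The two routes buy different things: the paper's is shorter and has no stabilization argument at all, but the construction as written is non-effective and the theorem claims nothing about the complexity of $R$; your route adds priority bookkeeping but yields an explicitly $\Delta^0_2$ bi-canonically immune set, which is a genuine (if minor) strengthening. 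One cosmetic remark: you could get the same effectivity with less bookkeeping by importing the paper's $I_p$ device into your staged construction -- having each requirement claim two fresh pairs rather than two arbitrary unclaimed elements -- since the disjointness of the pairs then enforces non-interference without tracking a global claimed-set.
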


\begin{proof}
Let $D_0 , D_1 , \ldots$ list all canonical numberings and let $\langle \cdot , \cdot \rangle$ be a fixed computable pairing function.  Let $f : \omega \rightarrow \omega$ be a computable function having the property that
\[
\langle e , i \rangle \leq f(i)
\]
for all $i \geq e$, e.g., $f(i) = \max_{e \leq i} \langle e , i \rangle$.  For convenience, denote by $I_p$ the set $\{ 2p , 2p+1 \}$ for each $p \in \omega$.  The construction of $R$ proceeds in stages and produces finite sets $R_s , Q_s , F_s  \subseteq \omega$ such that
\begin{itemize}
	\item $|F_s| \leq 2s$,
	\item $R_s \cap Q_s = \emptyset$ and 
	\item $R_s \cup Q_s = \bigcup_{p \in F_s} I_p$.
\end{itemize}

To begin with, define
\[
R_0 = Q_0 = F_0 = \emptyset.
\]
Suppose now, at stage $s = \langle e , i \rangle$, the finite sets $R_s$, $Q_s$ and $F_s$ are given with the above properties.  

{\em Case 1.}  If either $i < e$ or $|D_e (i)| \leq 4f(i) + 3$, let $R_{s+1} = R_s$, $Q_{s+1} = Q_s$, $F_{s+1} = F_s$ and end the stage. 

{\em Case 2.}  Suppose $i \geq e$ and $|D_e(i)| > 4f(i) + 3$.  Because $|F_s|  \leq 2s$, the finite union $\bigcup_{p \in F_s} I_p$ has cardinality at most $4s = 4\langle e , i \rangle \leq 4 f(i)$.  Thus, there are distinct $p_s,q_s \notin F_s $ with 
\[
D_e (i) \cap I_{p_s} \neq \emptyset \mbox{ and } D_e (i) \cap I_{q_s} \neq \emptyset.
\]
Let $x , y , z , w \in \omega$ be such that $I_{p_s} = \{ x , y \}$, $I_{q_s} = \{ z , w \}$, $x \in D_e (i) \cap I_{p_s}$ and $z \in D_e (i) \cap I_{q_s}$.  Now define 
\begin{itemize}
	\item $R_{s+1} = R_s \cup \{ y , z\}$,
	\item $Q_{s+1} = Q_s \cup \{ x , w \}$ and
	\item $F_{s+1} = F_s \cup \{ p_s , q_s \}$.
\end{itemize}
This completes case 2 of stage $s$.

To finish the construction, let $F = \bigcup_s F_s$,
\[
R = \{ 2p : p \notin F\} \cup \bigcup_s R_s 
\]
and
\[
Q = \{ 2p+1 : p \notin F\} \cup \bigcup_s Q_s.
\]
Notice that $R$ and $Q$ are both infinite and $R = \omega \setminus Q$.  The construction is symmetric in $R$ and $Q$.  Hence, the following claim suffices to complete the proof.

{\em Claim.}  $R$ is canonically immune with modulus of immunity $i \mapsto 4f(i) + 3$.

Indeed, fix a canonical numbering $D_e : \omega \rightarrow \mathcal P_{\sf fin} (\omega)$ and suppose
\[
|D_e(i)| > 4f(i) + 3
\]
for some $i \geq e$.  By adding $p_s$ to $F_s$, the strategy at stage $s = \langle e , i \rangle$ renders $I_{p_s}$ unavailable for use at later stages and guarantees that $R_t$ does not contain $D_e (i)$ for any $t \geq s$.  Specifically, the element $x \in D_e (i) \cap I_{p_s}$ is permanently withheld from $\bigcup_s R_s$.  Since $\{ 2p : p \notin F \}$ is disjoint from $\bigcup_s R_s$, the larger set $R$ must also not contain $D_e (i)$.
\end{proof}

\begin{theorem}
The canonically immune degrees are cofinal in the Turing degrees.
\end{theorem}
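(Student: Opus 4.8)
The plan is to relativize the construction from the proof of Theorem~\ref{delta two}, coding an arbitrary real $A \subseteq \omega$ into the increasing enumeration of the resulting canonically immune set. Fix $A \subseteq \omega$. It suffices to produce a canonically immune set $R$ with $A \leq_T R$, for then the Turing degree of $R$ lies above that of $A$, and since $A$ was arbitrary the canonically immune degrees are cofinal.

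Retaining the notation of the proof of Theorem~\ref{delta two}, let $\varphi$ be a universal partial computable function with associated approximations $D_{e,s}$, so that every canonical numbering $D$ satisfies $D(i) = \lim_s D_{e,s}(i)$ for some $e$. For each $s$, inductively pick $x_{n,s}$ to be least such that $x_{0,s} < x_{1,s} < \cdots < x_{n,s}$, such that $x_{n,s} \equiv A(n) \pmod 2$, and such that
\[
x_{n,s} \notin \bigcup \{ D_{e,s}(i) : e,i \leq n \text{ and } |D_{e,s}(i)| > i \}.
\]
Such an $x_{n,s}$ exists since only finitely many integers are excluded by the displayed condition. As in Theorem~\ref{delta two}, $x_n := \lim_s x_{n,s}$ exists -- for large $s$ the finitely many relevant $D_{e,s}(i)$ have stabilized and the parity requirement $A(n)$ is fixed -- and $x_0 < x_1 < \cdots$. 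Set $R = \{ x_n : n \in \omega \}$. Then $R$ is canonically immune with modulus of immunity $i \mapsto i$: the verification is word-for-word that of the Claim in Theorem~\ref{delta two}, since the extra parity constraint only shrinks the pool of candidates for $x_{n,s}$ and is never consulted in the argument that $D(i) \subseteq \{x_0,\dots,x_n\}$ with $e \leq i \leq n$ forces $|D(i)| \leq i$. Finally, $A \leq_T R$: from $R$ one computes its increasing enumeration $x_0 < x_1 < \cdots$ (possible because $R$ is infinite), and then recovers $A(n)$ as the parity of $x_n$.

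The one point genuinely requiring care is the interaction between the coding and the immunity requirement: a priori, forcing information about $A$ into $R$ risks creating large finite subsets of $R$ with small canonical index -- destroying canonical immunity -- or forcing the modulus to depend on $A$, which is not permitted. Encoding $A$ into the \emph{parity} of the sparse elements $x_n$ is what resolves this, since the avoidance computation is indifferent to parities and hence the same computable modulus $i \mapsto i$ survives the relativization. (Incidentally, the construction shows $R$ may be taken to be $\Delta^0_2$ relative to $A$.)
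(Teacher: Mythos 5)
Your proof is correct and rests on the same central device as the paper's: encode $A(n)$ into the parity of the $n$th element of a set built to dodge large canonical sets, then read $A$ back off the increasing enumeration. The paper packages this slightly differently -- it first builds an absolute canonically immune scaffold $Q = \{2p_n, 2p_n+1 : n\}$ and then takes the subset of $Q$ selecting one element of each pair according to $A(n)$, invoking closure of canonical immunity under infinite subsets, whereas you fold the parity constraint directly into the limit construction of Theorem~\ref{delta two} and re-verify the modulus $i \mapsto i$ by hand -- but the underlying coding idea and the avoidance mechanism are the same.
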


\begin{proof}
The objective is to construct a canonically immune set $R$ such that $A \leq_T R$ for any fixed $A \subseteq \omega$.  Let $D_0 , D_1 , \ldots$ list all canonical numberings.  Choose natural numbers 
\[
p_0 < p_1 < \ldots
\]
such that 
\[
2p_n , 2p_n +1 \notin \bigcup \{ D_e (i) : e,i \leq n \mbox{ and } |D_e(i)| > i\}.
\]
Let
\[
Q = \{ 2p_n : n \in \omega \} \cup \{ 2p_n + 1 : n \in \omega\}.
\]
To see that $Q$ is canonically immune with modulus of immunity $i \mapsto 2i+1$, suppose $D_e (i) \subseteq Q$ for some $i \geq e$.  Pick the least $n$ such that
\[
D_e (i) \subseteq \{ 2p_j : j \leq n \} \cup \{ 2p_j + 1 : j \leq n\}.
\]
In particular, either $2p_n \in D_e (i)$ or $2p_n + 1 \in D_e (i)$.  If $n < i$, then
\[
|D_e (i)| \leq 2n + 2 \leq 2i + 1.
\]
On the other hand, if $n \geq i$, the choice of $p_n$ guarantees that $|D_e (i)| \leq i$.

Now suppose that $A \subseteq \omega$ is any fixed set.  Let 
\[
R = \{ 2p_n : n \in A\} \cup \{ 2p_n + 1 : n \notin A\}.
\]
As an infinite subset of the canonically immune set $Q$, $R$ must itself be canonically immune.  Also $A \leq_T R$ since, for each $n \in \omega$,
\[
n \in A \iff \mbox{the $n$th element of $R$ is even.}
\]
This completes the proof.
\end{proof}


\section{Cohen generic reals}\label{generic}

Whereas Cohen genericity (especially weak 1-genericity) is closely related to immunity and hyperimmunity, the following initial observation already shows that there is not as strong a connection between Cohen genericity and canonical immunity.

\begin{theorem}
No weak 1-generic is canonically immune.
\end{theorem}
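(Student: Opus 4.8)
The plan is to show that no weak 1-generic real $R$ is canonically immune. Since weak 1-generics are infinite (for each $n$, the set of strings with at least $n$ ones is dense and c.e., so $R$ has at least $n$ ones), it is enough to prove the following: for every total computable $h : \omega \to \omega$ there is a canonical numbering $D$ with $D(i) \subseteq R$ and $|D(i)| > h(i)$ for infinitely many $i$. So fix such an $h$. I will build $D = D_h$ together with a family $(X_n)_{n \in \omega}$ of dense c.e.\ sets of strings such that meeting $X_n$ forces the existence of some $i \geq n$ with $D(i) \subseteq R$ and $|D(i)| > h(i)$; since $R$ is weak 1-generic it meets every $X_n$, and this yields the desired infinitely many indices.

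\emph{Building $D$.} Split $\omega$ into even and odd indices. Put $D(2k) = $ the finite set with canonical code $k$, so that $D$ is already surjective onto $\mathcal P_{\sf fin}(\omega)$, no matter how it is defined elsewhere. Fix a computable bijection $(\ell, j) \mapsto \langle \ell, j \rangle$ of $\omega^2$ onto the odd numbers and set
\[
D(\langle \ell, j \rangle) \;=\; \{\, \ell, \ell+1, \ldots, \ell + h(\langle \ell, j\rangle) \,\},
\]
a block of $h(\langle \ell, j \rangle) + 1$ consecutive integers starting at $\ell$. As $h$ is computable, membership in $D(i)$ and the function $i \mapsto \max D(i)$ are computable, so $D$ is a canonical numbering. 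The features that matter are: $|D(\langle \ell, j\rangle)| = h(\langle \ell, j\rangle) + 1 > h(\langle \ell, j\rangle)$; for each fixed $\ell$ the indices $\langle \ell, j\rangle$ are unbounded in $\omega$; and $D(\langle \ell, j\rangle)$ uses only positions $\geq \ell$.

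\emph{The dense sets.} For $n \in \omega$ let
\[
X_n \;=\; \{\, \sigma \in 2^{<\omega} : \exists\, i \geq n \ \text{odd with}\ \max D(i) < {\rm length}(\sigma)\ \text{and}\ D(i) \subseteq \sigma \,\}.
\]
This is c.e.: enumerate the odd $i \geq n$, compute $D(i)$, and list the finitely many strings $\sigma \supseteq D(i)$ of each length exceeding $\max D(i)$. For density, let $\sigma$ be arbitrary of length $\ell$, and choose $j$ with $i := \langle \ell, j\rangle \geq n$. Extend $\sigma$ to $\tau$ of length $\ell + h(i) + 1$ with $\tau(m) = 1$ for $\ell \leq m \leq \ell + h(i)$. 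Then $D(i) \subseteq \tau$, $\max D(i) = \ell + h(i) < {\rm length}(\tau)$, and $i \geq n$ is odd, so $\tau \in X_n$ and $\tau \succeq \sigma$.

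\emph{Conclusion.} Since $R$ is weak 1-generic, for each $n$ there is $\sigma \prec R$ with $\sigma \in X_n$, hence an odd $i \geq n$ with $D(i) \subseteq \sigma \prec R$; thus $D(i) \subseteq R$ and $|D(i)| = h(i) + 1 > h(i)$. Letting $n$ range over $\omega$ produces infinitely many such $i$, so $h$ is not a modulus of immunity for $R$. As $h$ was an arbitrary total computable function, $R$ is not canonically immune. The only delicate point is the construction of $D$: the natural canonical numbering is too parsimonious ever to violate a modulus (its sets $D(i)$ have size at most $\log_2 i$), so one must deliberately plant oversized sets at suitably large indices while keeping $D$ surjective with computable maximum — which is exactly what the even/odd split arranges.
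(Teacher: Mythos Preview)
Your proof is correct and follows essentially the same strategy as the paper: for a given candidate modulus $h$, build a canonical numbering with oversized sets placed beyond any given prefix, form the dense c.e.\ sets $X_n$ of strings that already contain such a set at some index $\geq n$, and invoke weak 1-genericity. The only difference is cosmetic: the paper merely asserts the existence of a canonical numbering $D$ with $\min(D(i))>i$ and $|D(i)|>h(i)$ for infinitely many $i$, whereas you explicitly construct one via the even/odd split, which is a perfectly good way to fill in that detail.
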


\begin{proof}
Suppose that $G \in 2^\omega$ is weakly 1-generic.  Fix a computable function $f: \omega \rightarrow \omega$ and a canonical numbering $D : \omega \rightarrow \mathcal P_{\sf fin} (\omega)$ such that, for infinitely many $i$,
\begin{itemize}
	\item $\min (D(i)) > i$ and
	\item $|D(i)| > f(i)$.
\end{itemize}
Let
\[
X_n = \{ \sigma \in 2^{< \omega} : (\exists i \geq n)(D(i) \subseteq \sigma \mbox{ and } |D(i)| > f(i) )\}.
\]
Each $X_n$ is c.e.~and dense.  Thus, $G$ has an initial segment in each $X_n$.  In particular, there are infinitely many $i$ such that $D(i) \subseteq G$ and $|D(i)| > f(i)$.  It follows that $f$ is not a modulus of immunity for $G$.  As $f$ was arbitrary, $G$ cannot be canonically immune.
\end{proof}

As a counterpoint, there are weakly 1-generic reals which are Turing equivalent to canonically immune sets.  This is a consequence of the following theorem, along with the fact, due to Kurtz \cite{kurtz}, that every hyperimmune is Turing equivalent to a weak 1-generic.

\begin{theorem}\label{CI+HI}
There is a canonically immune set $R$ which is also hyperimmune.
\end{theorem}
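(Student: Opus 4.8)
The plan is to rerun the construction from the proof of Theorem~\ref{delta two}, imposing one extra clause on the choice of the approximations $x_{n,s}$ so as to force the increasing enumeration of $R$ to outgrow every total computable function. Retain the sets $D_{e,s}$ from that proof, and fix in addition a listing $\psi_0,\psi_1,\ldots$ of all partial computable functions $\omega\to\omega$, writing $\psi_{e,s}(n)$ for the result of running $\psi_e(n)$ for $s$ stages. At stage $s$, inductively let $x_{n,s}$ be the least natural number with $x_{0,s}<x_{1,s}<\ldots$,
\[
x_{n,s}\notin\bigcup\{D_{e,s}(i):e,i\le n\text{ and }|D_{e,s}(i)|>i\},
\]
and, in addition,
\[
x_{n,s}>\psi_{e,s}(n)\quad\text{for every }e\le n\text{ with }\psi_{e,s}(n)\!\downarrow.
\]
Set $x_n=\lim_s x_{n,s}$ and $R=\{x_n:n\in\omega\}$.

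First I would check that each limit exists. For fixed $n$, the value $x_{n-1,s}$ converges by induction; the displayed union stabilizes because only the finitely many pairs $(e,i)$ with $e,i\le n$ occur and each $D_{e,s}(i)$ stabilizes; and for each of the finitely many $e\le n$, either $\psi_e(n)$ converges and $\psi_{e,s}(n)$ is eventually constant, or $\psi_e(n)$ diverges and never contributes to the bound. Hence $x_{n,s}$ is eventually constant, $x_0<x_1<\ldots$, and $R$ is an infinite $\Delta^0_2$ set.

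Canonical immunity, with modulus $i\mapsto i$, follows by the argument of Theorem~\ref{delta two} word for word, since the extra clause is merely an additional constraint on $x_{n,s}$ and plays no role there. Indeed, given a canonical numbering $D=\lim_s D_{e,s}$ and some $i\ge e$ with $D(i)\subseteq R$, let $n$ be least with $D(i)\subseteq\{x_0,\ldots,x_n\}$; if $n<i$ then $|D(i)|\le n+1\le i$, and if $n\ge i$ then for all large $s$ we have $x_n=x_{n,s}\in D_{e,s}(i)$, which forces $|D(i)|=|D_{e,s}(i)|\le i$ because $e,i\le n$.

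It remains to see that $R$ is hyperimmune. If $\psi_e$ is total, then for every $n\ge e$, choosing $s$ large enough that $x_{n,s}=x_n$ and $\psi_e(n)$ has already converged, the extra clause yields $x_n=x_{n,s}>\psi_{e,s}(n)=\psi_e(n)$; so the increasing enumeration $n\mapsto x_n$ of $R$ in fact dominates $\psi_e$ beyond $e$. Since every total computable function occurs as some $\psi_e$, the increasing enumeration of $R$ is not dominated by any computable function, whence $R$ is hyperimmune. I do not anticipate a real obstacle: the only point requiring care is the convergence verification above, which goes through precisely because at each index $n$ only finitely many — hence only boundedly many divergent — computations are consulted.
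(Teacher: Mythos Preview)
Your proof is correct. The convergence check, the canonical-immunity verification, and the hyperimmunity argument all go through as you describe.

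The paper's route is a bit different and somewhat more direct. Rather than piggybacking on the $\Delta^0_2$ construction of Theorem~\ref{delta two}, it fixes (noneffective) listings $D_0,D_1,\ldots$ of all canonical numberings and $f_0,f_1,\ldots$ of all total computable functions, sets $F_s=\bigcup\{D_e(i):e,i\le s,\ |D_e(i)|>i\}$, and simply picks $x_s\notin F_s$ with $x_s>x_{s-1}$ and $x_s>f_s(s)$; no approximation or limit is needed. Your approach is the effective version of the same idea: by working with stage-$s$ approximations $D_{e,s}$ and $\psi_{e,s}$ and taking limits, you additionally secure that the witness $R$ is $\Delta^0_2$ (and in fact that its increasing enumeration dominates every total computable function, not merely escapes it). The paper's version buys brevity---no convergence lemma---while yours buys a stronger conclusion at the cost of that extra verification.
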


\begin{proof}
Let $D_0 , D_1 , \ldots$ be a list of all canonical numberings and $f_0 , f_1 , \ldots$ a list of all total computable functions.  For each $s \in \omega$, consider the finite set 
\[
F_s = \bigcup \{ D_e (i) : i,e \leq s \mbox{ and } |D_e(i)| > i\}.
\]
Inductively pick $x_s \notin F_s$ large enough that
\[
x_s > x_{s-1} \mbox{ and } x_s > f_s (s).
\]
Let $R = \{ x_s : s \in \omega\}$.

{\em Claim.}  $R$ is canonically immune with modulus of immunity $i \mapsto i$.

Fix a canonical numbering $D = D_e$ and suppose $i \geq e$ is such that 
\[
 D_e(i) \subseteq R  \mbox{ and } |D_e (i) | > i.
\]
By the choice of $x_s$, 
\[
D_e (i) \subseteq \{ x_0 , \ldots , x_{i - 1}\}.
\]
In particular, $|D_e (i)| \leq i$, which is a contradiction.  Thus, for all $i \geq e$, if $D_e(i) \subseteq R$, then $|D_e(i)| \leq i$.

{\em Claim.}  $R$ is hyperimmune.

To see this, note that $x_0 < x_1 < \ldots$ is the increasing enumeration of $R$ and, since each $x_s$ is greater than $f_s (s)$, there is no computable bound for the function
\[
s \mapsto x_s.
\]
This completes the proof.
\end{proof}

On the other hand, a canonically immune set need not be hyperimmune.

\begin{theorem}\label{cinothi}
There exists a set $R \subseteq \omega$ which is canonically immune and such that neither $R$ nor $\omega \setminus R$ are hyperimmune.
\end{theorem}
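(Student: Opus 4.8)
The plan is to show that the set $R$ constructed in the proof of Theorem~\ref{bci} already witnesses the present statement, so that only a short additional observation is required. Recall that in that construction $\omega$ is partitioned into the pairs $I_p = \{2p, 2p+1\}$, and that $R$ and $Q = \omega \setminus R$ are built so that, for every $p$, exactly one of the numbers $2p$, $2p+1$ lies in $R$ and the other lies in $Q$. For $p \notin F$ this is immediate from the definitions of $R$ and $Q$; for $p \in F$, the index $p$ is enrolled in $F$ at exactly one stage $s$ (as $p_s$ or as $q_s$), and at that stage precisely one element of $I_p$ is added to $R_{s+1}$ and the other to $Q_{s+1}$, after which $I_p$ is never disturbed again.

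Granting this, the conclusion is a counting remark. For each $p$, the set $R$ meets each of the $p+1$ blocks $I_0, \ldots, I_p$ in exactly one point, so $R$ contains exactly $p+1$ elements of $\{0, 1, \ldots, 2p+1\}$; hence, writing $x_0 < x_1 < \cdots$ for the increasing enumeration of $R$, we get $x_p \le 2p+1$ for every $p$. Thus $R$ is not hyperimmune, as its increasing enumeration is majorized by the computable function $p \mapsto 2p+1$ (in particular $R$ is infinite). The identical argument applies to $Q$, which also meets every block $I_p$ in exactly one point; therefore $\omega \setminus R$ is not hyperimmune either. Since $R$ is canonically immune by Theorem~\ref{bci}, this completes the proof.

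I do not expect a serious obstacle here: the only thing needing care is the claim that the diagonalization against canonical numberings performed in the proof of Theorem~\ref{bci} never disrupts the block structure, i.e., that every $I_p$ ends up contributing exactly one point to $R$ and one to $Q$. This is precisely what that construction guarantees, via the estimate that $\bigcup_{p \in F_s} I_p$ has cardinality at most $4\langle e, i \rangle \le 4 f(i)$, which ensures that two previously untouched blocks are always available at an active stage, so that no block is ever acted on twice. If one wished to avoid invoking Theorem~\ref{bci}, one could simply repeat its construction here, recording along the way that the increasing enumerations of both $R$ and $\omega \setminus R$ are bounded by $p \mapsto 2p+1$.
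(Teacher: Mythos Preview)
Your argument is correct. The set $R$ from Theorem~\ref{bci} indeed has the property that each block $I_p=\{2p,2p+1\}$ contributes exactly one element to $R$ and one to $Q=\omega\setminus R$: for $p\notin F$ this is by definition, and for $p\in F$ the block is acted on exactly once (when $p$ enters $F$ as some $p_s$ or $q_s$) with one element going to $R_{s+1}$ and the other to $Q_{s+1}$; the invariants $R_s\cap Q_s=\emptyset$ and $R_s\cup Q_s=\bigcup_{p\in F_s}I_p$ then give the claim. The bound $x_p\le 2p+1$ for the increasing enumerations follows immediately, so neither $R$ nor $\omega\setminus R$ is hyperimmune.

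Your route differs from the paper's in packaging rather than in substance. The paper writes out a fresh construction---essentially a stripped-down version of the one in Theorem~\ref{bci} that touches only \emph{one} block per active stage and uses modulus $i\mapsto 2f(i)$ instead of $i\mapsto 4f(i)+3$---and then makes the same one-element-per-block counting observation you make. Your approach is more economical: you simply reuse the bi-canonically-immune set and add the non-hyperimmunity observation. What the paper's version buys is a slightly cleaner, self-contained argument with a tighter modulus; what yours buys is that no new construction is needed at all. Either way, the essential idea---diagonalizing within the block partition $\{I_p\}$ so that canonical immunity is achieved while each block still meets both $R$ and its complement---is the same.
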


\begin{proof}
Let $D_0 , D_1 , \ldots$ list all canonical numberings and let $\langle \cdot , \cdot \rangle$ be a fixed computable pairing function.  As in the proof of Theorem~\ref{bci}, choose a computable function $f: \omega \rightarrow \omega$ such that, for all $e$ and $i \geq e$,
\[
\langle e,i \rangle \leq f(i),
\]
and let $I_p = \{ 2p , 2p+1\}$ for $p \in \omega$.

The proof proceeds inductively and produces finite sets $R_s , F_s \subseteq \omega$ such that
\begin{itemize}
	\item $|F_s| \leq s$ and
	\item $R_s \subseteq \bigcup_{p \in F_s} I_p$.
\end{itemize}
To begin the induction, let $R_0 = F_0 = \emptyset$.  At stage $s = \langle e,i \rangle$, suppose that $R_s$ and $F_s$ are given with the above properties.  There are two cases.   

{\em Case 1.}  If $i < e$ or $|D_e (i)| \leq 2 f(i)$, let $R_{s+1} = R_s$ and $F_{s+1} = F_s$.

{\em Case 2.}  Suppose $i \geq e$ and $|D_e (i)| > 2 f(i)$.  Also, $|D_e(i)| > 2s$ since $f(i) \geq \langle e,i \rangle = s$ by the choice of $f$.  It follows that
\[
D_e(i) \setminus \bigcup_{p \in F_s} I_p \neq \emptyset
\]
because $|F_s| \leq s$ and each $I_p$ has cardinality 2.  Therefore, pick $p_s \notin F_s$ and $x_s \in I_{p_s}$ such that
\[
x_s \in D_e(i) \cap I_{p_s}.
\]
Let 
\[
R_{s+1} = R_s \cup (I_{p_s} \setminus \{ x_s \})
\]
and
\[
F_{s+1} = F_s \cup \{ p_s \}.
\]

Now let $F = \bigcup_s F_s$ and 
\[
R = \bigcup_s R_s \cup \{ 2p : p \notin F\}.
\]

{\em Claim.}  $R$ is canonically immune with modulus of immunity $i \mapsto 2 f(i)$.

Suppose that $D_e : \omega \rightarrow \mathcal P_{\sf fin} (\omega)$ is a canonical numbering and $|D_e (i)| > 2f(i)$ for some $i \geq e$.  As in the proof of Theorem~\ref{bci}, the strategy at stage $s = \langle e  , i \rangle$ ensures that the element $x_s \in D_e (i)$ is withheld from $R$.

{\em Claim.}  $R$ and $\omega \setminus R$ are not hyperimmune.

Indeed, for each $p \in \omega$, 
\[
| R \cap I_p | = |(\omega \setminus R) \cap I_p| = 1.
\]
Given that $I_p = \{ 2p , 2p+1\}$, the $k$th elements of $R$ and $\omega \setminus R$ must both be no more that $2k$.  In other words, the increasing enumerations of $R$ and $\omega \setminus R$ are bounded by the computable function $k \mapsto 2k$. 
\end{proof}

\begin{remark}
Theorem~\ref{cinothi} could also be obtained by constructing a Schnorr random of density $\frac{1}{2}$ and then using the fact that every Schnorr random is canonically immune along with the observation that a set of positive density cannot be hyperimmune.
\end{remark}

The main result proved in this section (Theorem~\ref{2-generic}) states that no 2-generic real computes a canonically immune set.  Although the next theorem is a consequence of Theorem~\ref{2-generic}, its proof illustrates the method used to diagonalize against canonically immune sets in the proof of Theorem~\ref{2-generic}.

\begin{theorem}
There is a set $R \subseteq \omega$ which is hyperimmune and not canonically immune.
\end{theorem}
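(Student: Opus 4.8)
The plan is a direct finite‑extension construction of $R$. (The statement also follows at once from earlier results — every weakly $1$‑generic real is hyperimmune and, by the theorem above, not canonically immune — but the point of the construction below is to isolate, in the simplest possible setting, the way one diagonalizes against canonical immunity in Theorem~\ref{2-generic}.) Writing $p_R$ for the increasing enumeration of $R$, we will meet, for every total computable $g$, the requirement that $p_R(n)>g(n)$ for infinitely many $n$ (giving hyperimmunity), and, for every total computable $h$, the requirement that there be a computable canonical numbering $D$ with $D(i)\subseteq R$ and $|D(i)|>h(i)$ for infinitely many $i$ (so that $h$ is not a modulus of immunity for $R$; since $h$ is arbitrary, $R$ is not canonically immune).

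Fix a computable partition $\omega=L_0\cup L_1\cup\cdots$ into consecutive intervals with $|L_n|=2^n$, so that $\min L_n=2^n-1$. The set $R$ will be a disjoint union $\bigcup_{n\in N}P_n$, where $N\subseteq\omega$ is an infinite ``active'' set determined during the construction and each $P_n$ is an initial segment of $L_n$. Given a total computable $h$, let $m(i)$ be least with $2^{m(i)}>\max\{h(2i),i\}$ and let $D_h(i)$ be the set of the first $\max\{h(2i),i\}+1$ elements of $L_{m(i)}$; after the routine adjustment making it surjective (fill the odd indices with a standard canonical numbering, keeping $D(2i)=D_h(i)$) this is a legitimate canonical numbering, $|D(2i)|=\max\{h(2i),i\}+1>h(2i)$, and $m(i)\to\infty$. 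So it suffices to arrange, for each total computable $h$, that $R$ contains the first $\max\{h(2i),i\}+1$ elements of $L_{m(i)}$ for infinitely many $i$ — the $\max$ with $i$ forcing $m(i)\to\infty$, and the $2i$ absorbing the surjectivization.

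Fix a (non‑computable) enumeration $g_0,g_1,\dots$ of all total computable functions in which each appears infinitely often. The construction runs in stages of two kinds and works \emph{in increasing order}: every newly activated index exceeds all previously active ones, and every element newly placed into $R$ exceeds all elements placed earlier, so no element of $R$ ever shifts position in $p_R$. At a \emph{hyperimmunity stage} assigned to $g_s$, let $c$ be the current cardinality of $R$, choose $n$ larger than every active index with $2^n-1>g_s(c)$, put $\min L_n$ into $R$, and activate $n$; since $c$ strictly increases and each total computable $g$ is some $g_s$ infinitely often, this yields $p_R(k)>g(k)$ for infinitely many $k$. At a \emph{modulus stage} assigned to $g_s$ (in the role of $h$), let $M$ exceed every active index and choose a fresh $i\ge 2^M$ — fresh meaning not used at an earlier modulus stage for this function; then $m(i)>M$, so $L_{m(i)}$ lies above everything currently in $R$ and its index is not yet active, and we put the first $\max\{g_s(2i),i\}+1$ elements of $L_{m(i)}$ into $R$ and activate $m(i)$. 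Carried out at infinitely many stages for each $h$, this supplies the required infinitely many $i$. Thus $R$ is hyperimmune — hence immune.

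The point to watch — the feature shared with the diagonalization in Theorem~\ref{2-generic} — is the apparent conflict between the two families of requirements. Defeating an arbitrary candidate modulus $h$ demands a genuinely computable canonical numbering, which pins the blocks $D_h(i)$ to computably locatable positions; yet $R$ is hyperimmune, hence immune, and so contains no infinite c.e.~subset, whereas dropping the blocks of such a $D_h$ into $R$ for a \emph{computable} set of indices $i$ would produce one. The resolution is that $D_h(i)\subseteq R$ is only needed for infinitely many $i$: out of the fixed computable layout we activate only the extremely sparse set $N$, whose choice depends non‑computably on the list $g_0,g_1,\dots$, and $N$ automatically contains no infinite c.e.~set — an infinite c.e.~subset of $R$ would, via the computable partition into the $L_n$, yield one inside $N$ — precisely because $R$ is hyperimmune. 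What then remains is bookkeeping: that a fresh $n$ (respectively $i$) with the stated size properties is always available (immediate, since only finitely many indices are active at any stage) while still handling every total computable $g$ and every total computable $h$ infinitely often, and that the in‑order, above‑the‑frontier discipline is preserved throughout (immediate from the choices above). The main obstacle to verify, then, is not the existence of the required objects but the consistency of these choices, i.e.\ that hyperimmunity is genuinely unharmed by the block insertions — which is exactly what processing the layout in increasing order secures.
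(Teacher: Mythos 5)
Your construction is correct, and in spirit it matches the paper's: both proofs build $R$ as a disjoint union of carefully pinned ``blocks,'' using the starting position of each block to witness a failure of a computable bound on $p_R$ and using the block's cardinality to refute a candidate modulus via a computable canonical numbering that places its $2i$-th value at a computably locatable position. The organizational difference is that the paper's proof makes each block serve both purposes at once --- it picks $n_p$ with $n_p > f_i(s+1)$ so that $H_{f_i}(n_p)$ simultaneously pushes the $(s+1)$st element of $R$ above $f_i(s+1)$ and, being of size $> f_i(2n_p)$, refutes $f_i$ as a modulus --- whereas you split the work between hyperimmunity stages (single elements, placed high) and modulus stages (whole blocks, large). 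Your explicit ``in increasing order, above the frontier'' discipline, enforced by requiring newly activated indices to exceed all previously active ones, is a more careful version of something the paper leaves implicit: the paper's claim that $\min H_{f_i}(n_{\langle i,k\rangle})$ sits at position $s+1$ in $R$ silently uses that all blocks are placed in increasing position order, which requires choosing $n_p$ large enough for $H_{f_i}(n_p)$ to lie above the previous blocks, not merely disjoint from them. So your proof is correct, essentially the same approach, and arguably tidier about the positional bookkeeping. Your opening observation that the statement already follows from weak $1$-genericity implying hyperimmunity plus the preceding theorem is also valid, and indeed the paper notes that this theorem follows from Theorem~\ref{2-generic} but gives the direct construction anyway to isolate the diagonalization technique --- the same motivation you give.
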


\begin{proof}
For each total computable function $f : \omega \rightarrow \omega$, let $H_f : \omega \rightarrow \mathcal P_{\mathsf{fin}} (\omega)$ be a computable function such that 
\begin{itemize}
	 \item $|H_f (n)| > f(2n)$,
	 \item $\min \left(H_f (n)\right) \geq n$ and
	 \item the $H_f (n)$ are pairwise disjoint sets.
\end{itemize}
Let $f_0 , f_1 , \ldots$ list all total computable functions and let $\langle \cdot , \cdot \rangle$ be a fixed computable pairing function.  Define $n_p$ inductively as follows: if $0 = \langle a , b \rangle$, let $n_0 > f_a (0)$.  Given $p = \langle i , k \rangle$ and $n_0 , \ldots , n_{p-1}$, let
\[
s = \sum_{\langle j,q \rangle < p} \left| H_{f_j} (n_{\langle j , q \rangle}) \right|
\]
and choose $n_p \in \omega$ such that
\begin{itemize}
	\item $n_p > f_i (s+1)$ and
	\item $H_{f_i} (n_p)$ is disjoint from all $H_{f_j} (n_{\langle j,q \rangle})$ for $\langle j,q \rangle < p$.
\end{itemize}
It possible to choose such an $n_p$ because the sets $H_{f_i} (n)$ are pairwise disjoint and nonempty for each fixed $f_i$.  Let
\[
R = \bigcup_{i,k \in \omega} H_{f_i} (n_{\langle i,k \rangle}).
\]
Noting that $R$ is infinite, the following two claims suffice to complete the proof.

{\em Claim.}  $R$ is hyperimmune.

It suffices to show that the increasing enumeration of $R$ is not eventually bounded by any of the $f_i$.  Indeed, fix $k \in \omega$ and let
\[
s = \sum_{\langle j,q \rangle < \langle i , k \rangle} \left| H_{f_j} (n_{\langle j,q \rangle}) \right|.
\]
The minimum element of $H_{f_i} (n_{\langle i , k \rangle})$ is therefore the $(s+1)$st element of $R$.  By the choice $n_{\langle i , k \rangle}$ and the definition of $H_{f_i}$,
\[
f_i(s+1) < n_{\langle i , k \rangle} \leq \min \left( H_{f_i} (n_{\langle i,k \rangle}) \right).
\]
As $k$ was arbitrary, the increasing enumeration of $R$ must infinitely often exceed $f_i$.

{\em Claim.}  $R$ is not canonically immune.

Fix one of the $f_i$.  To see that $f_i$ is not a modulus of immunity for $R$, let $D : \omega \rightarrow \mathcal P_{\mathsf{fin}} (\omega)$ be a canonical numbering with the property that
\[
D(2n) = H_{f_i} (n)
\]
for each $n \in \omega$.  For $k \in \omega$, 
\[
D(2 n_{\langle i, k \rangle}) \subseteq R
\]
and, by the definition of $H_{f_i}$,
\[
\left| D(2 n_{\langle i,k \rangle}) \right| > f_i (2 n_{\langle i,k \rangle}).
\]
This completes the proof.
\end{proof}

As discussed in the introduction, no 2-generic bounds a canonically immune real.

\begin{theorem}\label{2-generic}
If $G \in 2^\omega$ is 2-generic, $G$ computes no canonically immune real.
\end{theorem}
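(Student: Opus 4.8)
The plan is to argue by contradiction, adapting the diagonalization of the preceding theorem. Suppose $\Phi$ is a Turing functional, $G\in 2^\omega$ is $2$-generic, $\Phi^G$ is the characteristic function of a set $R$, and some total computable $h=\varphi_e$ is a modulus of immunity for $R$; I aim to derive a contradiction. A canonically immune set is infinite by definition, so $R$ is infinite. First I would put $h$ in a convenient form: replacing $h$ by $i\mapsto\max_{j\le i}h(j)$ only enlarges it and so preserves the modulus property, so I may take $h$ nondecreasing; and if $\liminf_i h(i)=L<\infty$, then $I=\{i:h(i)\le L\}$ is an infinite computable set, and a canonical numbering that enumerates, cofinally along $I$, every finite set of size $L+1$ (and is a standard numbering off $I$) already refutes ``$h$ is a modulus'', using only that the infinite set $R$ contains some set of size $L+1$. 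So I may assume $h$ is nondecreasing and $h(i)\to\infty$.

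The core step is to build, uniformly from $\Phi$ and $h$, a computable canonical numbering $D$ whose values are forced blocks of $R$ that are too large for $h$. Given $i$, I would search over pairs $(\tau,F)$, where $\tau\in 2^{<\omega}$, $F$ is finite with $|F|=h(i)+1$, $\min F\ge i$, $F$ is disjoint from $D(0)\cup\dots\cup D(i-1)$, and $\Phi^\tau(m)\downarrow = 1$ with use at most $|\tau|$ for every $m\in F$; then set $D(i)$ equal to the first $F$ so found. Because $R=\Phi^G$ is infinite, arbitrarily large finite subsets of $R$ are forced by initial segments of $G$, so such a pair exists and the search halts; thus $D$ is total computable, and a little padding on auxiliary indices makes it surjective. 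Since $|D(i)|=h(i)+1>h(i)$ for every $i$, the hypothesis that $h$ is a modulus of immunity for $R$ yields an $N$ with $D(i)\not\subseteq R$ for all $i\ge N$.

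I would then contradict this using the $2$-genericity of $G$, by showing that $D(i)\subseteq R$ for infinitely many $i$. The idea is to meet, for each $N$, the set of conditions $\sigma$ that force some block $D(i)$ with $i\ge N$ entirely into $R$ using only the oracle bits of $\sigma$ itself; meeting all of these gives $D(i)\subseteq R$ infinitely often. The crucial — and delicate — point is to see that these sets are dense (and to place the argument at the right arithmetic level): the only way $G$ could avoid such a set is through an initial segment below which no further value is forced into $R$, and that asserts exactly that $R$ is finite below the segment, which is a $\Sigma^0_2$ rather than a $\Sigma^0_1$ phenomenon; it is here that the extra quantifier available to a $2$-generic (as opposed to a merely weakly $1$-generic) real is used, and this is what keeps the theorem from holding of weak $1$-generics.

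The hard part will be precisely this coordination: $D$ is a single fixed computable object, so the computable search defining $D(i)$ may well produce a block forced by an ``off-path'' string rather than by an initial segment of $G$, and one must still guarantee that infinitely many blocks land inside the particular set $R=\Phi^G$. Arranging the search and the dense sets so that a $2$-generic $G$ is driven, cofinally often, into a position where one of $D$'s oversized blocks is realized within an initial segment of $G$ is the substance of the argument; once that is in place, the clash with the modulus $h$ is immediate, and the remaining bookkeeping — disjointness and sizes of the blocks, and verifying that $D$ really is a canonical numbering — is routine.
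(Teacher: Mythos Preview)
Your outline has the right shape --- build a canonical numbering with oversized blocks and use genericity to force infinitely many of them into $R = \Phi^G$ --- but the construction of $D$ you propose does not support the density argument, and you have not actually closed the gap you yourself flag as ``the hard part.''

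Concretely: your $D(i)$ is the \emph{first} pair $(\tau,F)$ found by a blind search over all strings $\tau$.  That $\tau$ may be incompatible with $G$, and then there is no reason any extension of a given $\sigma \prec G$ can force the specific set $D(i)$ into $\Phi$.  Your claim that ``the only way $G$ could avoid such a set is through an initial segment below which no further value is forced into $R$'' is false: extensions of $\sigma$ may force arbitrarily many \emph{new} elements into $R$ while still never forcing any of the particular blocks $D(i)$, $i \ge N$, because those blocks were chosen off-path.  So your sets $T_N = \{\sigma : (\exists i \ge N)\, D(i) \subseteq \Phi^\sigma\}$ need not be dense below any initial segment of $G$, and the $2$-genericity dichotomy gives you nothing useful in the ``avoid'' case.

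The paper's fix is exactly the missing idea.  First apply $2$-genericity to the $\Pi^0_1$ set $X = \{\sigma : (\forall \tau \succeq \sigma)\, W^\tau_{e,|\tau|} = W^\sigma_{e,|\sigma|}\}$: either $G$ meets $X$ (and $W^G_e$ is finite), or some $\sigma \prec G$ has no extension in $X$, whence \emph{every} $\tau \succeq \sigma$ has extensions making $W^\tau_e$ as large as you like.  Only now, with $\sigma$ fixed, define the numbering: for each string $\alpha_i$ and each $n$, compute an extension $\beta(i,n)$ of $\sigma{}^\smallfrown\alpha_i$ with $|W^{\sigma{}^\smallfrown\alpha_i{}^\smallfrown\beta(i,n)}_e| > f(2\langle i,n\rangle)$ and set $D(2\langle i,n\rangle)$ to a suitably large finite subset.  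The point is that the numbering is indexed by \emph{all} extensions of $\sigma$, so the $\Sigma^0_1$ sets $Y_n = \{\sigma{}^\smallfrown\alpha_i{}^\smallfrown\beta(i,n) : i \in \omega\}$ are automatically dense below $\sigma$, and $1$-genericity below $\sigma$ finishes.  Allowing $D$ to depend on the finite string $\sigma$ (hence, harmlessly, on $G$) is what makes the density go through; your attempt to define a single $G$-independent $D$ with one block per index is what breaks it.
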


\begin{proof}
Fix a 2-generic real $G \in 2^\omega$.  It suffices to show that $W^G_e$ is not canonically immune for any $e \in \omega$.  To this end, fix $e \in \omega$ and a computable function $f: \omega \rightarrow \omega$.  The objective of the proof is to show that $f$ is not a modulus of immunity for $W^G_e$.  Consider first the $\Pi^0_1$ set of strings
\[
X = \{ \sigma \in 2^{< \omega} : (\forall \tau \succeq \sigma) (W^\tau_{e , {\rm length} (\tau)} =  W^\sigma_{e , {\rm length} (\sigma)})\}.
\]

As $G$ is 2-generic, there is either an initial segment of $G$ in $X$, or some $\sigma \prec G$ such that no extension of $\sigma$ is in $X$.  In the first case, $W^G_e$ is finite and consequently not canonically immune.  In the second case, suppose $\sigma \prec G$ is such that no extension of $\sigma$ is in $X$.  In particular, for each $\tau \succeq \sigma$, there exists $\tau' \succeq \tau$ with 
\[
W^{\tau'}_{e , {\rm length} (\tau')} \setminus W^\tau_{e , {\rm length} (\tau)} \neq \emptyset.
\]
By induction, it follows that, for each $\tau \succeq \sigma$ and $n \in \omega$, there is a $\rho \succeq \tau$ with 
\[
\left| W^\rho_e \right| \geq n.
\]
Fix a computable enumeration $\alpha_0 , \alpha_1 , \ldots$ of $2^{<\omega}$ and a computable pairing function $\langle \cdot , \cdot \rangle$.  Let $\beta : \omega^2 \rightarrow 2^{<\omega}$ be a computable function such that, for each $i , n \in \omega$,
\[
|W^{\sigma {}^\smallfrown \alpha_i {}^\smallfrown \beta( i , n )}_e| > f(2 \langle i , n \rangle).
\]
The function $\beta$ is defined for every pair $i,n$ since each $\tau \succeq \sigma$ has an extension $\rho$ with $|W^\rho_e| > f(2\langle i , n \rangle)$ by remarks above.  Let $H : \omega^2 \rightarrow \mathcal P_{\sf fin} (\omega)$ be a computable function such that, for $i,n \in \omega$,
\begin{itemize}
	\item $|H(i,n)| > f( 2 \langle i , n \rangle)$ and 
	\item $H( i , n ) \subseteq W^{\sigma {}^\smallfrown \alpha_i {}^\smallfrown \beta( i , n )}_e$.
\end{itemize}
For instance, $H$ could output a canonical index for the finite set consisting of the first $f(2\langle i , n \rangle) + 1 $ elements enumerated into $W^{\sigma {}^\smallfrown \alpha_i {}^\smallfrown \beta( i , n )}_e$.

Consider now the $\Sigma^0_1$ set of strings
\[
Y_n = \{ \sigma {}^\smallfrown \alpha_i {}^\smallfrown \beta ( i , n ) : i , n \in \omega\}. 
\]
Observe that each $Y_n$ is dense below $\sigma$.  Hence, each $Y_n$ must contain an initial segment of $G$ since $G$ is 2-generic and $\sigma \prec G$.  For each pair $i , n \in \omega$ with $\sigma {}^\smallfrown \alpha_i {}^\smallfrown \beta (i , n) \prec G$,
\[
H ( i , n ) \subseteq W^{\sigma {}^\smallfrown \alpha_i {}^\smallfrown \beta( i , n )}_e \subseteq W^G_e,
\]
by the choice of $H$.  Therefore, let $D : \omega \rightarrow \mathcal P_{\sf fin} (\omega)$ be any canonical numbering such that
\[
D( 2 \langle i , n \rangle ) = H( i , n ).
\]
Whenever $\sigma {}^\smallfrown \alpha_i {}^\smallfrown \beta (i , n) \prec G$,
\[
D(2 \langle i , n \rangle) \subseteq W^G_e.
\]
In particular, there are infinitely many $k$ such that $D(k) \subseteq W^G_e$ and $|D(k)| > f(k)$.  As $f$ was arbitrary, it follows that $W^G_e$ is not canonically immune.
\end{proof}

\begin{remark}
The proof of Theorem~\ref{2-generic} actually establishes a stronger result than necessary: no canonically immune real is $\Sigma^0_1$ (i.e., c.e.) in a 2-generic.
\end{remark}

Because the  set of 2-generic reals is comeager, Theorem~\ref{2-generic} yields the following corollary.

\begin{corollary}
The set of reals which bound no canonically immune set is comeager.
\end{corollary}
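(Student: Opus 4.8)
The plan is to read this off immediately from Theorem~\ref{2-generic} together with the classical fact that the set of $2$-generic reals is comeager in $2^\omega$. First I would recall the one-line justification of the latter, in case one prefers not to cite it as folklore: for each $\Sigma^0_2$ set of strings $X \subseteq 2^{<\omega}$, the set
\[
N_X = \{ R \in 2^\omega : (\exists \sigma \prec R)\bigl(\sigma \in X \text{ or } (\forall \tau \succeq \sigma)\, \tau \notin X\bigr) \}
\]
is open, and it is dense because an arbitrary $\sigma \in 2^{<\omega}$ either has an extension in $X$ — in which case every real through that extension lies in $N_X$ — or has no extension in $X$, in which case $\sigma$ itself witnesses membership for every real through $\sigma$. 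Since a real is $2$-generic exactly when it lies in $N_X$ for all $\Sigma^0_2$ sets $X$, and there are only countably many such $X$, the collection of $2$-generics is a countable intersection of dense open sets, hence comeager.

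Next, interpreting ``$B$ bounds $A$'' as ``$A \leq_T B$'', Theorem~\ref{2-generic} says that every $2$-generic real belongs to
\[
\mathcal B = \{ B \in 2^\omega : B \text{ bounds no canonically immune set} \}.
\]
Thus $\mathcal B$ contains the comeager set of $2$-generics, and any superset of a comeager set is comeager; therefore $\mathcal B$ is comeager, which is the assertion of the corollary.

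I do not expect a genuine obstacle here. The only point that deserves a moment's attention is the reading of ``bounds'' in the statement: as long as this means Turing reducibility, Theorem~\ref{2-generic} applies verbatim. (If one instead wanted the strictly stronger conclusion that the class of reals in which no canonically immune set is c.e.\ is comeager, one would invoke the remark following Theorem~\ref{2-generic} in place of the theorem itself, with no other change to the argument.)
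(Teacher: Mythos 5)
Your proof is correct and follows the same route as the paper, which simply notes that the class of $2$-generics is comeager and invokes Theorem~\ref{2-generic}. Your extra paragraph justifying comeagerness of the $2$-generics is accurate but standard; the paper takes it as known.
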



\section{Mathias generic reals}\label{mathias}

In this section, if $a \subseteq \omega$ is a finite set, $\chi_a \in 2^{<\omega}$ denotes the binary string of length $\max(a) + 1$ such that
\[
\chi_a (n) = 1 \iff n \in a.
\]

As described in the introduction, there is a relationship between canonical immunity and Mathias genericity.

\begin{theorem}\label{mathias-generic}
Every Mathias generic is canonically immune.
\end{theorem}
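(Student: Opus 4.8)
The plan is to show that for any Mathias generic $R$, a specific computable function serves as a modulus of immunity. The natural candidate is something like $h(i) = |D_i(i)| $—no wait, $D$ ranges over all canonical numberings, so $h$ must be fixed in advance and work for every $D$. Following the pattern of the basic results (Theorems~\ref{delta two}, \ref{CI+HI}), I would aim for the modulus $h(i) = i$, or perhaps a simple variant. The key point is that canonical immunity is a single statement ``there exists $h$ \ldots for all $D$'', so it suffices to exhibit one $h$ and show $R$ beats every $D$ with that $h$.

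The heart of the argument is a density fact in the effective Ellentuck topology. Fix a canonical numbering $D$ (equivalently, fix an index $e$ with $D = D_e$) and a threshold $k$. I would define the set of Mathias conditions
\[
\mathcal{X}_{e,k} = \{ [a,A] : \text{for all } i \text{ with } e \leq i \text{ and } \min(A) \leq i,\ \text{if } D_e(i) \subseteq a \cup A \text{ then } |D_e(i)| \leq i \}
\]
or some refinement thereof—the exact formulation needs care so that membership in $\mathcal X_{e,k}$ is arithmetical and so that being in $[a,A] \in \mathcal X_{e,k}$ actually forces the immunity bound on $R$. The cleaner approach: given a condition $[a,A]$, I want to shrink $A$ to an infinite computable $A' \subseteq A$ so that for every $i \geq e$, if $D_e(i) \subseteq a \cup A'$ then $|D_e(i)| \leq i$. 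To do this, enumerate $A$ as $c_0 < c_1 < \cdots$ and, mimicking the constructions in Section~3, choose $A' = \{ c_{n} : n \in S \}$ for a computable $S$ that, at ``step'' $n$, omits points lying in any $D_e(i)$ with $e \le i \le n$ and $|D_e(i)| > i$. Since at most finitely many such bad finite sets are relevant at each stage and each is finite, $A'$ remains infinite and computable. Then $[a, A'] \subseteq [a,A]$ lies in the target dense set, and $D_e$ is ``permanently defeated'' below this condition.

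Next I would verify that if $R \in [a,A']$ for such a condition, then for all but finitely many $i$, $D_e(i) \subseteq R \implies |D_e(i)| \leq i$: indeed if $D_e(i) \subseteq R \subseteq a \cup A'$ with $i \geq e$ and $|D_e(i)| > i$, then by construction $A'$ avoids $D_e(i)$ past stage $i$, so $D_e(i) \subseteq a \cup \{c_n : n \leq i, n \in S\}$, a set of size at most $|a| + i + 1$; choosing the shrinking procedure slightly more aggressively (e.g.\ thinning so that the $n$th retained element also exceeds the $(n{-}1)$st bad-set-union) forces $|D_e(i)| \le i$ outright, or else one absorbs the additive constant into a modulus $h(i) = i + |a|$—but $|a|$ depends on the condition, so in fact the right move is to also require $|a|$ many initial shrinking steps, giving a uniform bound. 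The set $\mathcal{X}_{e,k}$ is dense (every condition contains one of its members, by the shrinking just described) and arithmetically definable (the construction of $A'$ from $[a,A]$ is uniform, and membership is $\Pi^0_2$ in the parameters); hence $R$ meets it. Running over all $e$, $R$ meets $\mathcal{X}_{e,0}$ for every $e$, so for each canonical numbering $D = D_e$, there is a condition $[a,A'] \ni R$ witnessing $|D_e(i)| \leq h(i)$ for all sufficiently large $i$. Thus $R$ is canonically immune with modulus $h$.

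The main obstacle I anticipate is bookkeeping the additive constant $|a|$ coming from the finite ``stem'' of the Mathias condition, so as to land a \emph{single} computable modulus $h$ that does not depend on the condition—this is handled by building the shrinking so that the first element of $A'$ is retained only after $|a|$ rounds of the thinning procedure, equivalently by re-indexing. A secondary, routine check is confirming that the density set is genuinely arithmetical (so that a Mathias generic must meet it) and that ``meeting'' the set has the desired consequence for $R$ itself rather than merely for sets in the condition's cone; both follow from the Remark on containment of Mathias conditions and the definition of Mathias genericity.
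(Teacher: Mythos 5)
Your proposal follows essentially the same route as the paper: for each canonical numbering $D$, show that the set of conditions $[a,A]$ forcing $|D(i)|\leq i$ whenever $D(i)\subseteq a\cup A$ and $i\geq|a|$ is dense (by thinning $A$ so that the $i$th retained element avoids $D(|a|),\dots,D(i-1)$), and conclude that a generic $R$ has modulus of immunity $i\mapsto i$. The only refinement in the paper's version is that the shrinking picks, for each $i>|a|$, the \emph{least} element of $A$ beyond the previous choice avoiding $\bigcup_{|a|\leq j<i}D(j)$; this automatically guarantees $B$ is infinite and gives the clean count $|D(i)|\leq|a|+(i-|a|)=i$, which is exactly the ``more aggressive thinning'' you anticipated needing.
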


\begin{proof}
For each canonical numbering $D : \omega \rightarrow \mathcal P_{\sf fin} (\omega)$, define a family $\mathcal X_D$ of Mathias conditions by
\[
\mathcal X_D = \{ [a,A] : (\forall i \geq |a| ) (D(i) \subseteq a \cup A \implies |D(i)| \leq i)\}.
\]

{\em Claim.}  For each canonical numbering $D$, the set $\mathcal X_D$ of Mathias conditions is dense.

Indeed, fix a Mathias condition $[a,A]$.  Let $n \mapsto x_n$ be the increasing enumeration of the infinite computable set $A$.  Inductively choose 
\[
n_{|a|+1} < n_{|a|+ 2} < \ldots,
\]
where each $n_i$ is least such that
\[
x_{n_i} \notin \bigcup_{|a| \leq j < i} D(j)
\]
for $i > |a|$.  Let 
\[
B = \{ x_{n_i} : i > |a|\}
\]
and observe that $B$ is an infinite computable subset of $A$.  To see that $[a,B] \in \mathcal X_D$, suppose $D(i) \subseteq a \cup B$ for some $i \geq |a|$.  By the choice of $n_i$,
\[
D(i) \subseteq a \cup \{ x_{n_{|a|+1}} , \ldots , x_{n_i}\}.
\]
In particular, $|D(i)| \leq |a| + (i - |a|) = i$ and hence $[a,B] \in \mathcal X_D$.  Since $B \subseteq A$, it follows that $[a,A] \supseteq [a,B]$.  This shows that $\mathcal X_D$ is dense.

{\em Claim.}  If $R$ meets each $\mathcal X_D$, then $R$ is canonically immune.

Fix a canonical numbering $D$ and let $[a,A]$ be such that 
\[
R \in [a,A] \in \mathcal X_D.
\]
If $i \geq |a|$ and $D(i) \subseteq R$, then also $D(i) \subseteq a \cup A$ and hence, by the definition of $\mathcal X_D$, 
\[
|D(i)| \leq i.
\]
As $D$ was arbitrary, $R$ must be canonically immune with modulus of immunity $i \mapsto i$.  

It now follows that any Mathias generic real is canonically immune.
\end{proof}

\begin{remark}
When working with computable Mathias forcing, the typical approach is to identify each Mathias condition $[a,A]$ with a pair $(x,e)$ where $x$ is a canonical code for the finite set $a$ and $e$ is a Turing machine code for the characteristic function of $A$.  Thus, the statement that $[a,A]$ is a Mathias condition is equivalent to the $\Pi^0_2$ statement
\begin{enumerate}
	\item $\{ e \}$ is total,
	\item $(\forall n) (\{ e \} (n) \in \{ 0 , 1 \})$,
	\item $(\exists^\infty n ) (\{ e \} (n) = 1)$ (i.e., $A$ is infinite) and
	\item $\max (a) < \min (A)$.
\end{enumerate}
Examining the proof of Theorem~\ref{mathias-generic} reveals that the dense sets $\mathcal X_D$ required to show that Mathias generics are canonically immune are $\Pi^0_1$ definable sets of Mathias conditions.  Thus, the sets of codes for these conditions are $\Pi^0_2$ and hence $\Sigma^0_3$.  In particular, only Mathias 3-genericity is required to obtain canonical immunity.  Refer to \cite{cdhs} for the definition of Mathias $n$-genericity.
\end{remark}

In view of the remarks above, Theorem~\ref{2-generic}, along with the result of Theorem~\ref{mathias-generic}, gives an alternative proof of a result from \cite{cdhs}.

\begin{corollary}[Cholak-Dzhafarov-Hirst-Slaman~\cite{cdhs}]
If $G$ is Cohen 2-generic, then $G$ bounds no Mathias 3-generic.
\end{corollary}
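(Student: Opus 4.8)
The plan is to obtain the corollary as a formal consequence of Theorem~\ref{2-generic} together with the sharpened version of Theorem~\ref{mathias-generic} noted in the preceding remark. I would argue by contradiction: suppose $G \in 2^\omega$ is Cohen 2-generic and that, toward a contradiction, there is a Mathias 3-generic real $M$ with $M \leq_T G$. Since Mathias generics are infinite (the family $\{[a,A] : |a| \geq n\}$ is dense, and its set of codes is of low arithmetical complexity, hence met by any Mathias 3-generic), $M$ is an infinite set and it makes sense to ask whether it is canonically immune. The goal is to show that it is, which then contradicts Theorem~\ref{2-generic} since $G$ computes $M$.

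To see that $M$ is canonically immune, I would re-examine the proof of Theorem~\ref{mathias-generic}. For each canonical numbering $D$, the family
\[
\mathcal X_D = \{ [a,A] : (\forall i \geq |a|)(D(i) \subseteq a \cup A \implies |D(i)| \leq i)\}
\]
was shown there to be dense in the effective Ellentuck topology. As explained in the remark following that theorem, under the standard identification of a Mathias condition $[a,A]$ with a pair $(x,e)$ — $x$ a canonical code for $a$ and $e$ a Turing code for the characteristic function of $A$ — the predicate defining $\mathcal X_D$ is $\Pi^0_1$, so the set of codes for members of $\mathcal X_D$ is $\Pi^0_2$, and in particular $\Sigma^0_3$, uniformly in $D$. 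By the definition of Mathias $n$-genericity from \cite{cdhs}, a Mathias 3-generic meets every $\Sigma^0_3$ dense set of Mathias conditions; hence $M$ meets each $\mathcal X_D$. The second claim in the proof of Theorem~\ref{mathias-generic} then applies verbatim: meeting every $\mathcal X_D$ forces $M$ to be canonically immune with modulus of immunity $i \mapsto i$.

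Combining the two steps: $M$ is a canonically immune real and $M \leq_T G$, so $G$ computes a canonically immune real, contradicting Theorem~\ref{2-generic}. Hence no Cohen 2-generic can bound a Mathias 3-generic. The only point requiring care — and it is already dispatched in the remark above — is the complexity bookkeeping, namely confirming that the density of $\mathcal X_D$ and its defining $\Pi^0_1$ condition together keep the code set within $\Sigma^0_3$ and that this bound is uniform in $D$, so that a single Mathias 3-generic meets all the $\mathcal X_D$ simultaneously; every other ingredient is a direct citation of results already established.
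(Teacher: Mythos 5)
Your argument is exactly the one the paper intends: combine the sharpened remark (Mathias 3-genericity suffices for canonical immunity, since the code sets of the $\mathcal X_D$ are $\Sigma^0_3$) with Theorem~\ref{2-generic} to get a contradiction. The paper leaves this proof implicit with the phrase ``In view of the remarks above,'' and your write-up fills in precisely those details, including the minor point that Mathias 3-generics are infinite.
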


Binns, Kjos-Hanssen, Lerman and Solomon \cite[Corollary 6.7]{Binns}, have shown that every Mathias 3-generic is {\em high}, i.e., if $G$ is Mathias 3-generic, $G' \geq_T \emptyset''$.  On the other hand, Kurtz showed in his Ph.D.~thesis that the set of reals which are not high has Lebesgue measure 1 in $2^\omega$.  In particular, there are Schnorr random reals -- hence, canonically immune reals -- which are not high since the class of Schnorr random reals also has measure 1.  It follows that there is a measure 1 set of canonically immune reals which do not compute Mathias 3-generics.  Therefore, by asserting that no 2-generic computes a canonically immune set, Theorem~\ref{2-generic} above is strictly stronger than the Cholak-Dzhafarov-Hirst-Slaman result.

The last topic of this section is the relationship between canonical and effective immunity.  The following proposition is a consequence of existing results.

\begin{proposition}
Every effectively immune set computes a canonically immune set.
\end{proposition}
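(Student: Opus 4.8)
The plan is to establish the (slightly stronger) assertion that every effectively immune set \emph{is} itself canonically immune; the proposition then follows at once, since every set computes itself. Recall that $A \subseteq \omega$ is effectively immune if it is infinite and there is a total computable $g$ with $W_e \subseteq A \implies |W_e| \le g(e)$ for all $e$. Fix such an $A$ and $g$. Identifying $\mathcal P_{\sf fin}(\omega)$ with the set of canonical codes (as in the excerpt), a canonical numbering is simply a total computable function $\omega \to \omega$, and so equals $\varphi_d$ for some $d$ in the standard enumeration $\varphi_0, \varphi_1, \ldots$ of partial computable functions.

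First I would use the $s$-$m$-$n$ theorem to fix a total computable function $\langle d, i\rangle \mapsto e_{\langle d, i\rangle}$ such that, whenever $\varphi_d(i)$ converges, $W_{e_{\langle d, i\rangle}}$ enumerates exactly the finite set whose canonical code is $\varphi_d(i)$ (and $W_{e_{\langle d, i\rangle}} = \emptyset$ if $\varphi_d(i)$ diverges). In particular, if $D = \varphi_d$ is a genuine canonical numbering, then $W_{e_{\langle d, i\rangle}} = D(i)$ for every $i$. Then I set
\[
h(i) \;=\; \max_{d \le i}\, g\bigl(e_{\langle d, i\rangle}\bigr),
\]
which is total computable.

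To see that $h$ is a modulus of immunity for $A$, fix a canonical numbering $D = \varphi_d$ and an $i \ge d$ with $D(i) \subseteq A$. Then $W_{e_{\langle d, i\rangle}} = D(i) \subseteq A$, so effective immunity yields
\[
|D(i)| = \bigl|W_{e_{\langle d, i\rangle}}\bigr| \le g\bigl(e_{\langle d, i\rangle}\bigr) \le h(i),
\]
the last step because $d \le i$. Hence $D(i) \subseteq A \implies |D(i)| \le h(i)$ for all but finitely many $i$ (namely all $i \ge d$); as $D$ was arbitrary and $A$ is infinite, $A$ is canonically immune, and in particular $A$ computes a canonically immune set.

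The only genuinely delicate point — and the heart of the argument — is producing a \emph{single} modulus $h$ uniform over all canonical numberings: for a fixed $D = \varphi_d$ the natural bound is $i \mapsto g(e_{\langle d, i\rangle})$, which depends on $d$, and $d$ cannot be bounded computably. The $\max_{d \le i}$ device dominates all of these countably many bounds simultaneously, at the price of failing for $\varphi_d$ only on the inputs $i < d$ — exactly the finitely many exceptions the definition of canonical immunity tolerates. The accompanying subtlety, that the canonical numberings do not form an effectively enumerable class, causes no trouble: one works with the whole list $\varphi_0, \varphi_1, \ldots$, and the verification above only ever invokes those $d$ for which $\varphi_d$ happens to be a canonical numbering.
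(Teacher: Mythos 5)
Your proof is correct, and it takes a genuinely different and more elementary route than the paper's. The paper's argument chains through several black boxes: effectively immune $\Rightarrow$ computes a fixed-point-free function $\Rightarrow$ computes a DNC $\Rightarrow$ (by unpublished work of Greenberg--Miller) computes an infinite subset of a Martin-L\"of random $\Rightarrow$ computes an infinite subset of a Schnorr random $\Rightarrow$ (by \cite[Corollary 5.6]{bkk}) computes a canonically immune set. Your argument, by contrast, is a short self-contained $s$-$m$-$n$ calculation: translate each value $D(i)$ of a canonical numbering $D = \varphi_d$ into a c.e.\ index $e_{\langle d,i\rangle}$, apply the effective-immunity bound $g$ to that index, and absorb the dependence on $d$ by the $\max_{d\le i}$ device, charging the unavoidable mismatch only to the finitely many $i < d$ that the definition of canonical immunity already forgives. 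In fact your argument establishes the strictly stronger statement that every effectively immune set \emph{is} canonically immune (with the explicit modulus $h(i) = \max_{d\le i} g(e_{\langle d,i\rangle})$), not merely that it computes one. Two small notational remarks that do not affect correctness: a canonical numbering is required to be \emph{surjective} onto $\mathcal P_{\sf fin}(\omega)$, so it is not quite ``simply a total computable function $\omega\to\omega$'' --- but you never use surjectivity, so the argument survives; and you rightly note that one must quantify over \emph{all} $\varphi_d$, not just those that happen to be canonical numberings, since the latter do not form a c.e.\ class.
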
  

\begin{proof}
Every effectively immune set computes a fixed point free function and hence a DNC.  By  unpublished work of Noam Greenberg and Joe Miller, every DNC computes an infinite subset of a Martin-L\" of random and hence computes an infinite subset of a Schnorr random.  The proposition now follows by Beros--Khan--Kjos-Hanssen \cite[Corollary 5.6]{bkk}, which states that every infinite subset of a Schnorr random computes a canonically immune set.
\end{proof}

\begin{theorem}\label{eff}
There exists a set $R \subseteq \omega$ which is canonically immune and computes no effectively immune set.
\end{theorem}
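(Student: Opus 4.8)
The plan is to build $R$ by a forcing argument combining computable Mathias forcing (to secure canonical immunity, via Theorem~\ref{mathias-generic}) with a second forcing that diagonalizes against every potential Turing functional computing an effectively immune set from $R$. Recall that a set $S$ is \emph{effectively immune} if it is infinite and there is a total computable function $g$ such that $W_e \subseteq S \implies |W_e| \leq g(e)$ for all $e$. To defeat a candidate reduction $\Phi = \{j\}$ and a candidate modulus $g = \{m\}$, it suffices to force either that $\{j\}^R$ is not total, or that $\{j\}^R$ is not the characteristic function of an infinite set, or that $\{j\}^R$ is the characteristic function of some set $S$ for which $g$ fails to be a modulus of effective immunity -- that is, to force the existence of some $e$ with $W_e \subseteq S$ and $|W_e| > g(e)$. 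The first main step is to set up the appropriate notion of forcing: conditions will be Mathias conditions $[a,A]$ (with $A$ infinite computable), and I would interleave, across the construction, requirements $\mathcal{R}_{\langle D\rangle}$ demanding $R$ meet $\mathcal{X}_D$ (dense, by Theorem~\ref{mathias-generic}) with requirements $\mathcal{N}_{\langle j,m\rangle}$ demanding that $\{j\}^R$ not witness effective immunity with modulus $\{m\}$.

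The second and central step is to show the $\mathcal{N}$-requirements can be met densely. Fix $[a,A]$ and indices $j,m$. If there is an extension $[b,B] \subseteq [a,A]$ and an input $n$ forcing $\{j\}^{R}(n)\uparrow$ for all $R\in[b,B]$, or forcing $\{j\}^{R}(n)\notin\{0,1\}$, we are done; likewise if we can force the set computed by $\{j\}^R$ to be finite (force $\{j\}^R(n)=0$ for all large $n$). So assume no such extension exists: below $[a,A]$, for every extension the functional $\{j\}$ (with oracle any real in the condition) looks like it is computing the characteristic function of an infinite set. Here I exploit the key feature of Mathias forcing: from a condition $[b,B]$ I may pass to $[b\cup\{x\},B']$ where $x\in B$ and $B'\subseteq B$ is the tail of $B$ above $x$, and this only commits finitely many new bits of $R$. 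By iterating, I can compute -- uniformly and computably in the index of the condition -- larger and larger finite portions of the set $\{j\}^R$ along a fixed computable ``spine'' obtained by always throwing the least available element of the current reservoir into the stem. Along this spine, enumerate an auxiliary c.e.\ set $W_e$ (with $e$ obtained by the recursion theorem) consisting of elements that the spine has forced into $\{j\}^R$; once $|W_e|$ exceeds $\{m\}(e)$ (if $\{m\}(e)\downarrow$; if it diverges, $\{m\}$ is not total and there is nothing to do), commit to the finite extension $[b^*,B^*]$ along the spine that has already forced $W_e\subseteq\{j\}^R$. This extension lies below $[a,A]$ and satisfies $\mathcal{N}_{\langle j,m\rangle}$. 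The set of conditions meeting $\mathcal{N}_{\langle j,m\rangle}$ in this way is dense and arithmetically definable.

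The third step is the bookkeeping: build a descending sequence of conditions $[a_0,A_0]\supseteq[a_1,A_1]\supseteq\cdots$ meeting all $\mathcal{R}_{\langle D\rangle}$ and $\mathcal{N}_{\langle j,m\rangle}$ in some effective enumeration, with $|a_s|\to\infty$, and set $R=\bigcup_s a_s$. Since $R$ meets every $\mathcal{X}_D$, Theorem~\ref{mathias-generic}'s argument shows $R$ is canonically immune with modulus $i\mapsto i$; since $R$ meets every $\mathcal{N}_{\langle j,m\rangle}$, no $\{j\}^R$ is an effectively immune set, so $R$ computes no effectively immune set. (One can phrase this as: any sufficiently generic real for the arithmetically definable dense sets $\mathcal{X}_D$ and $\mathcal{N}_{\langle j,m\rangle}$ works; a Mathias generic itself will do.)

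The main obstacle is the second step -- specifically, making the recursion-theoretic fixed point mesh with the forcing. One must be careful that the spine along which $W_e$ is enumerated is genuinely computable (so that $W_e$ is genuinely c.e.) and does not secretly depend on the yet-undetermined value $\{m\}(e)$; the enumeration of $W_e$ proceeds blindly along the spine, and only the \emph{decision of which condition to commit to} consults $\{m\}(e)$. A subtlety to check is that when I move the element $x$ from the reservoir into the stem, the use of the computations $\{j\}^R(n)$ I have already fixed is not disturbed -- this is exactly why Mathias forcing (where extending the stem does not retroactively change the already-decided part of the oracle) is the right tool, whereas a naive argument with Cohen conditions would not localize the use correctly. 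Finally one must confirm that in the ``bad'' case -- where below $[a,A]$ the functional always appears total with infinite range -- the spine really does force unboundedly many elements into $\{j\}^R$, which follows because each reservoir is infinite and each stem-extension step forces at least the finitely many new bits of $\{j\}^R$ that stabilize.
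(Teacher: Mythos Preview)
Your overall architecture is exactly the paper's: take $R$ Mathias generic, invoke Theorem~\ref{mathias-generic} for canonical immunity, and for each oracle index and each candidate modulus exhibit a dense arithmetical family of conditions forcing failure of effective immunity (this is the paper's Lemma~\ref{computes}). The density argument there also pivots on the Recursion Theorem, so the strategies coincide.

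The gap is in how you deploy the Recursion Theorem. You insist that $W_e$ be enumerated ``blindly along the spine,'' independent of $\{m\}(e)$, with only the \emph{commitment decision} consulting $\{m\}(e)$. But then nothing halts the enumeration: in the interesting case the spine forces unboundedly many elements into $\{j\}^{R_{\mathrm{spine}}}$, so your blind $W_e$ is infinite. The finite condition $[b^*,B^*]$ you commit to forces only the elements enumerated \emph{so far} into $\{j\}^R$; once the generic $R$ departs from the spine above $b^*$, later members of $W_e$ need not land in $\{j\}^R$, and you lose $W_e\subseteq\{j\}^R$. The repair is precisely to drop your self-imposed restriction: the Recursion Theorem hands the procedure its own index, so it may legitimately compute $\{m\}(e)$ and halt after $\{m\}(e)+1$ elements. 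The paper does this in one stroke---defining a computable $g$ so that $W_{g(i)}$ is a \emph{finite} set of size exceeding $h(i)$ contained in the functional's output on a finite stem-extension $\rho(i)$ chosen \emph{after} consulting $h(i)$, and then taking a fixed point $j$ of $g$ and committing to the condition with stem $a\cup\rho(j)$. The dependence of $\rho(i)$ and $W_{g(i)}$ on $i$ is not a circularity to be avoided; it is exactly what keeps the witnessing $W_j$ finite and hence forceable into the oracle computation by a single Mathias condition.
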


The following lemma is a typical application of forcing methods in computability theory.  Combining it with the assertion (Theorem~\ref{mathias-generic}) that every Mathias generic is canonically immune yields the theorem above.

\begin{lemma}\label{computes}
If $R \in [\omega]^{\aleph_0}$ is Mathias generic, then $R$ computes no effectively immune set.
\end{lemma}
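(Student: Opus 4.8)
The plan is to diagonalize against all potential reductions simultaneously using Mathias conditions. Recall that a set $B$ is effectively immune if $B$ is infinite and there is a computable function $g$ such that $W_j \subseteq B \implies |W_j| \leq g(j)$ for all $j$. So it suffices to show that, for every oracle machine code $e$ and every computable function $g$, the family
\[
\mathcal{Y}_{e,g} = \{ [a,A] : \text{either } [a,A] \text{ forces } \{e\}^R \text{ to fail to be an effectively immune set with bound } g \}
\]
is a dense (arithmetically definable) set of Mathias conditions; then any Mathias generic $R$ meets each such family, so no $\{e\}^R$ is effectively immune. The key case to handle is when $[a,A]$ has an extension $[b,B]$ forcing $\{e\}^R$ to be total and to be the characteristic function of an infinite set $C = \{e\}^R$ that is a genuine ``candidate.'' (If no such extension exists, we are already done below that condition — $\{e\}^R$ is forced partial, non-$\{0,1\}$-valued, or finite.)

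The heart of the argument: given a condition $[a,A]$ for which $\{e\}^R$ looks like it will be total with infinite range, I would build, inside $A$, a large finite set and then force many elements into $C$ at once while keeping the corresponding $W_j$ small. Concretely, the set $C = \{e\}^R$ is itself a c.e.\ set relative to $R$, but more importantly we can exploit genericity directly: by thinning $A$ appropriately and extending $a$, drive a single element $m$ into $C$ (i.e., find $b \supseteq a$ with $b \setminus a \subseteq A$ and $B \subseteq A$ such that $[b,B]$ forces $m \in \{e\}^R$), then iterate to force a set of size $g(j)+2$ into $C$ for a suitable index $j$ whose enumeration we control. The index $j$ is obtained by the recursion theorem: we design a c.e.\ procedure that, with knowledge of its own index $j$, waits to see (via the forcing) the finite set of $g(j)+2$ elements we have committed to putting into $C$ and enumerates exactly those into $W_j$. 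Then $W_j \subseteq C \subseteq B$ for any $R \in [b,B]$ extending our final condition, yet $|W_j| = g(j)+2 > g(j)$, so $g$ is not a valid effective-immunity bound for $C$. Since $[b,B] \subseteq [a,A]$, this witnesses density of $\mathcal{Y}_{e,g}$.

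The main obstacle is the simultaneous coordination of three things: (i) the choice of the c.e.\ index $j$ via the recursion theorem must happen before we know how large $g(j)$ is, yet we must be able to force at least $g(j)+2$ elements into $C$; this is fine because we first compute $g(j)$ (once $j$ is fixed by the recursion theorem the value $g(j)$ is a fixed number), then do the forcing. (ii) Forcing an element $m$ into $\{e\}^R$ requires that such an extension exists — but if for some condition below $[a,A]$ no further element can ever be forced into $\{e\}^R$, then $\{e\}^R$ is forced to be a subset of a fixed finite set along that branch, hence finite, hence not immune, and we land in $\mathcal{Y}_{e,g}$ for the trivial reason; so the case split is clean. (iii) Keeping $A$ infinite and computable throughout the finitely many thinning steps — routine, since each step removes only finitely much or passes to a computable subset, and we perform only $g(j)+2$ steps. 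I would also remark, as in the remark following Theorem~\ref{mathias-generic}, that $\mathcal{Y}_{e,g}$ is arithmetically definable (the predicates involved are $\Sigma^0_2$ or so relative to the description of the condition), so only finitely much genericity — in fact some fixed level of Mathias $n$-genericity — is needed, though the bare statement only requires full Mathias genericity.
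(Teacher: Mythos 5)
Your proposal is correct and takes essentially the same route as the paper: both define, for each oracle index and candidate immunity bound, a dense arithmetical family of Mathias conditions; both split into a case where the computed set is forced finite and a case where the Recursion Theorem produces a c.e.\ index $j$ whose set $W_j$ is forced to lie inside the generic's computed set while $|W_j|$ exceeds the bound at $j$. The paper realizes the recursion-theorem step by a single computable search for a large finite fragment $\rho(i)\subseteq A$ with $|W^{\chi_{a\cup\rho(i)}}_e|$ big, while you describe an iterative forcing loop, but this is an inessential variation on the same argument.
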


\begin{proof}
Given $e \in \omega$ and a computable function $h : \omega \rightarrow \omega$, let $\mathcal D_{e , h}$ be the set of Mathias conditions $[a,A]$ such that either
\begin{enumerate}
\item $(\exists i \in \omega) (W_i \subseteq W^{\chi_a}_e \mbox{ and } |W_i| > h(i))$ or 
\item $(\exists n \in \omega) (\forall b \in \mathcal P_{\sf fin} (\omega)) ( a \subseteq b \subseteq a \cup A \implies |W^{\chi_b}_e| \leq n)$.
\end{enumerate}
The sets $\mathcal D_{ e , h }$ are all arithmetical families of Mathias conditions.

{\em Claim.}  Each $\mathcal D_{e , h}$ is dense.

To this end, fix a Mathias condition $[a,A]$.  Suppose first that there exists $n \in \omega$ such that, for every $R \in [a,A]$,
\[
\left|W^R_e\right| \leq n.
\]
In this case, $[a,A] \in \mathcal D_{ e , h }$ since $[a,A]$ satisfies condition (2) above.  

Suppose now that, given any $n \in \omega$, there exists $R \in [a,A]$ such that $\left|W^R_e\right| > n$.  In particular, for every $n \in \omega$, there is a finite set $b$ with $a \subseteq b \subseteq a \cup A$ and
\[
|W^{\chi_b}_e| > n.
\]
Therefore, let $g : \omega \rightarrow \omega$ and $\rho : \omega \rightarrow \mathcal P_{\sf fin} (A)$ be computable functions such that 
\[
W_{g(i)} \subseteq W^{\chi_{a \cup \rho(i)}}_e
\]
and
\[
|W_{g(i)}| > h(i).
\]
Using the Recursion Theorem, let $j$ be a fixed point for the function $g$, i.e., $W_{g(j)} = W_j$.  Set $b = a \cup \rho(j)$ and take 
\[
B = A \cap [\max (\rho(j)) + 1 , \infty).
\]
It follows that $[b,B] \subseteq [a,A]$ and $[b,B]$ satisfies condition (1) above, i.e., $[b,B] \in \mathcal D_{ e , h }$.  This establishes that each $\mathcal D_{ e , h }$ is dense.

Let $R$ be Mathias generic.  Given $e \in \omega$ and a computable function $h : \omega \rightarrow \omega$, let $[a,A] \in \mathcal D_{ e , h }$ with $R \in [a,A]$.  If $[a,A]$ satisfies condition (1) in the definition of $\mathcal D_{ e , h }$, then $W^R_e$ is not effectively immune via $h$ since there exists $i \in \omega$
\[
W_i \subseteq W^{\chi_a}_e \subseteq W^R_e
\]
with $|W_i| > h(i)$.  On the other hand, if $[a,A]$ satisfies condition (2), $W^R_e$ is finite and again not effectively immune.

It now follows that $W^R_e$ is not effectively immune for any $e \in \omega$.  In particular, $R$ computes no effectively immune subset of $\omega$.
\end{proof}

\begin{remark}
On the other hand, every canonically immune set contains an effectively immune set.  In particular, there are reals which are both canonically immune and effectively immune.  Such reals are not bounded by any Mathias generic.

To construct an effectively immune subset of a canonically immune set $R = \{ r_0 < r_1 < \ldots\}$, run the standard construction of an effectively immune set inside $R$:  at stage $s$, pick the least $e \leq s$ such that
\[
X_s = W_e \cap \{ r_{2e} , r_{2e + 1} , \ldots \} \neq \emptyset
\] 
and remove $y_s = \min (X_s)$ from $R$.  The resulting set 
\[
Q = R \setminus \{ y_s : s \in \omega\}
\]
is still infinite since
\[
\left| Q \cap \{ r_0 , \ldots , r_n\}\right| \geq n/2
\]
for each $n \in \omega$.  As an infinite subset of a canonically immune set, $Q$ is canonically immune.  Furthermore, $Q$ is effectively immune because any $W_e  \subseteq Q$ is a subset of 
\[
\{ r_0 , r_1 , \ldots , r_{2e-1}\}
\]
and must therefore have cardinality at most $2e$.
\end{remark}


\section{Canonical immunity vs.~Schnorr randomness}\label{schnorr-randomness}

The final result of this paper is an application of the fact that every Mathias generic is canonically immune.

\begin{theorem}\label{schnorr}
There exists a set $R\subseteq \omega$ which is a canonically immune set and not Schnorr random.
\end{theorem}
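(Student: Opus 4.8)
The plan is to take $R$ to be a Mathias generic real. By Theorem~\ref{mathias-generic}, $R$ is then automatically canonically immune, so the entire burden of the proof is to show that a Mathias generic is never Schnorr random. The intuition is that Mathias genericity makes $R$ extremely sparse: given any condition, one can always thin its reservoir so as to leave an arbitrarily long block of $\omega$ disjoint from every real in the refined condition, and a set which omits infinitely many sufficiently long blocks cannot be Schnorr random.

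To make this precise, fix at the outset a computable partition of $\omega$ into consecutive finite blocks $I_0, I_1, \ldots$ with $|I_m| = m+2$, so that the sequence of left endpoints of the $I_m$ is computable and tends to infinity. Writing $\mu$ for Lebesgue measure on $2^\omega$, set
\[
U_n = \{ X \in 2^\omega : (\exists m \geq n)\,(X \cap I_m = \emptyset) \}.
\]
Each $U_n$ is open and generated by a c.e.\ set of strings uniformly in $n$, and since the events $\{ X \cap I_m = \emptyset \}$ involve disjoint blocks of coordinates and have probability $2^{-|I_m|}$, subadditivity gives $\mu(U_n) \leq \sum_{m \geq n} 2^{-(m+2)} = 2^{-(n+1)} < 2^{-n}$; moreover, by independence,
\[
\mu(U_n) = \lim_{M \to \infty} \Bigl( 1 - \prod_{n \leq m \leq M} \bigl( 1 - 2^{-|I_m|} \bigr) \Bigr),
\]
and the convergence is at the computable rate $2^{-(M+2)}$, so $\mu(U_n)$ is computable uniformly in $n$. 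Hence $(U_n)_{n \in \omega}$ is a Schnorr test, and $\bigcap_n U_n$ is a null $\Pi^0_2$ class consisting precisely of those $X$ with $X \cap I_m = \emptyset$ for infinitely many $m$. It therefore suffices to produce, for a Mathias generic $R$, infinitely many $m$ with $R \cap I_m = \emptyset$.

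For this, for each $n$ let $\mathcal Z_n$ be the arithmetical family of Mathias conditions $[a, A]$ for which there is some $m \geq n$ with $(a \cup A) \cap I_m = \emptyset$. I would verify that $\mathcal Z_n$ is dense as follows: given a condition $[a, A]$, choose $m \geq n$ large enough that $\min(I_m) > \max(a)$ -- possible since the blocks march off to infinity -- so that $a \cap I_m = \emptyset$ automatically, and put $B = A \setminus I_m$. Then $B$ is again an infinite computable set with $\min(B) \geq \min(A) > \max(a)$, so $[a, B]$ is a Mathias condition refining $[a, A]$ and satisfying $(a \cup B) \cap I_m = \emptyset$, i.e.\ $[a, B] \in \mathcal Z_n$. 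Since $R$ is Mathias generic, it meets every $\mathcal Z_n$; hence for every $n$ there is some $m \geq n$ with $R \cap I_m = \emptyset$, so such $m$ are unbounded, $R \in \bigcap_n U_n$, and $R$ is not Schnorr random.

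The one step demanding genuine care is the verification that $(U_n)_{n \in \omega}$ is a bona fide Schnorr test -- that the $\mu(U_n)$ are bounded by $2^{-n}$ and, crucially, computable uniformly in $n$ -- which is precisely what the choice $|I_m| = m + 2$ is arranged to guarantee; the density of $\mathcal Z_n$ and the appeal to Theorem~\ref{mathias-generic} are then routine. (Alternatively, one could invoke the facts that every Schnorr random has density $\tfrac12$ while a Mathias generic is too sparse for that, but the explicit test above is cleaner and self-contained.)
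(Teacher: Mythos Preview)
Your argument is correct and follows essentially the same route as the paper: take $R$ Mathias generic, invoke Theorem~\ref{mathias-generic} for canonical immunity, and exhibit a Schnorr test $\bigcap_n U_n$ built from the events ``$X$ avoids the $m$th block'' that every Mathias generic must fail, using a density argument to thin reservoirs so as to miss arbitrarily late blocks. The only cosmetic differences are that the paper uses a single dense family $\mathcal X$ (conditions whose reservoir misses infinitely many blocks) in place of your countable family $\mathcal Z_n$, and that your treatment of the measure of $U_n$ is actually more careful than the paper's---you correctly bound it and verify uniform computability, whereas the paper asserts the measure is ``exactly $2^{-n}$,'' which is not literally true.
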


\begin{proof}
Because every Mathias generic is canonically immune, it suffices to show that there are Mathias generics which are not Schnorr random.  In fact, it turns out that every Mathias generic is not Schnorr random.  

Let $F_1 , F_2 , \ldots \subseteq \omega$ be pairwise disjoint consecutive intervals with $|F_i| = i$.  For each $n \in \omega$, define the open set
\[
U_n = \{ R \in 2^\omega : (\exists i > n) (R \cap F_i = \emptyset)\}.
\]
Notice that the Lebesgue measure of $U_n$ is exactly $2^{-n}$.  In particular, the $\Pi^0_2$ class
\[
\bigcap_n U_n = \{ R \in 2^\omega : (\exists^\infty i) (R \cap F_i = \emptyset)\}
\]
is a Schnorr test.

{\em Claim.}  There is a dense $\Sigma^0_2$-definable set $\mathcal X$ of Mathias conditions such that $\bigcup \mathcal X \subseteq \bigcap_n U_n$.

To verify this claim, let
\[
\mathcal X = \{ [a,A] : (\exists^\infty i) (A \cap F_i = \emptyset)\}
\]
and observe that $\bigcup \mathcal X \subseteq \bigcap_n U_n$.  To see that $\mathcal X$ is dense, fix any Mathias condition $[a,A]$ with 
\[
A = \{ x_0 < x_1 < \ldots \}.
\]
Inductively choose $x_{m_0} < x_{m_1} < \ldots$ and $i_0 < i_1 < \ldots$ such that the map $p \mapsto x_{m_p}$ is computable and, for all $p \in \omega$,
\[
\max (F_{i_p}) < x_{m_p} < \min (F_{i_{p+1}}).
\]
This is always possible because $A$ is an infinite computable set.  Define a computable subset of $A$ by letting
\[
B = \{ x_{m_p} : p \in \omega\}
\]
and note that there are infinitely many $i$ with $B \cap F_i = \emptyset$, i.e., $[a,B] \in \mathcal X$.  Also, $[a,B] \subseteq [a,A]$ and, as $[a,A]$ was arbitrary, this shows that $\mathcal X$ is dense and establishes the claim.

Any Mathias generic real must meet every dense arithmetical set of conditions.  In particular, a Mathias generic $G$ must be a member of $\bigcup \mathcal X \subseteq \bigcap_n U_n$.  Hence, $G$ is not Schnorr random.
\end{proof}

\begin{remark}
As noted above, Binns, Kjos-Hanssen, Lerman and Solomon \cite[Corollary 6.7]{Binns} showed that every Mathias generic is high.  By Nies--Stephan--Terwijn \cite[Theorem 4.2]{nst}, every high set is Turing equivalent to a Schnorr random.  Thus, although no Mathias generic is Schnorr random, every Mathias generic is of Schnorr random degree.
\end{remark}


\end{document}